\newtheorem{thm}{Theorem}
\newtheorem{prop}{Proposition}
\newtheorem{lem}{Lemma}
\newcommand{\R}{\mathbb{R}}	%
\newcommand{\N}{\mathbb{N}}	%
\newcommand{\eps}{\varepsilon}	%
\newcommand{\pa}{\partial}		%
\newcommand{\Div}{\textrm{div}\,}	%
\newcommand{\na}{\nabla}		%
\newcommand{\chf}[1]{{\raisebox{3pt}{\Large $\chi$}}_{#1}}
\newcommand{\IRd}{\int_{\R^d}}
\newcommand{\rhoe}{\rho_{\eps}}
\newcommand{\rhod}{\rho_{\delta}}
\newcommand{\Rd}{\mathbb{R}^d}
\newcommand{\dx}{\,{\rm d}x}
\newcommand{\dt}{\,{\rm d}t}
\newcommand{\rhoin}{\rho_{{\rm in}}}
\title[A quasilinear Keller-Segel with saturated discontinuous advection]{A quasilinear Keller-Segel model with saturated discontinuous advection}
\author{Maria Pia Gualdani}
\address{Department of Mathematics, The University of Texas at Austin, Austin TX, USA}
\email{gualdani@math.utexas.edu}
\author{Mikel Ispizua}
\address{Department of Mathematics, Autonoma University, Madrid, Spain.}
\email{mikel.ispizua@gmail.com}
\author{Nicola Zamponi}
\address{University\"{a}t Augsburg, Institut f\"ur Mathematik,  Augsburg, Germany}
\email{nicola.zamponi@alumni.uni-ulm.de}
\date{\today}
\begin{document}

	\thanks{M.P.G. is partially supported by NSF Grant DMS-2206677 and DMS-2511625. The authors would like to thank Franca Hoffmann and Inwon Kim for the fruitful discussions.}  
	
	\begin{abstract} 
		We consider the singular limit of a chemotaxis model of bacterial collective motion recently introduced in \cite{CalvezHoff2020}. The equation models aggregation-diffusion phenomena with advection that is discontinuous and depends sharply on the gradient of the density itself. The quasi-linearity of the problem poses major challenges in the construction of the solution and complications arise in the proof of regularity. Our method overcomes these obstacle by relying solely on entropy inequalities and the theory of monotone operators. We provide existence, uniqueness and $L^\infty$-smoothing estimates in any dimensional space.

	\end{abstract}

	\maketitle
\tableofcontents	
\section{Introduction and main result}
Since the classical experiment of J. Adler on \emph{Escherichia coli} \cite{Adler66} many experiments have confirmed the formation and propagation of traveling pulses of bacteria. The Keller-Segel model has been widely used as a basis to explain this phenomena in bacterial colonies driven by chemotaxis, see \cite{KellerSegel71,LiWang2021}: the model describes how the cell density moves towards a nutrient $N$ and a chemical molecule $S$ (\emph{chemoatractant}) which is generated by the cells themselves. The velocity of this movement is proportional to the gradient of these two quantities. The Keller-Segel model has been the subject of intense mathematical investigation.

Recent works (\cite{calvezPerthame2010, Perthame07, HillerOthmer00, FilberLaurencotPertham05, CMPS04, ErbanOthmer07}) have introduced a novel class of equations, originating from a mesoscopic description of the \emph{run-and-tumble} phenomenon at the individual cell level. Unlike the classical Keller-Segel framework, these models incorporate a saturated cell velocity (or flux), which is aligned with the gradients of chemical signals, specifically proportional to $\frac{\nabla S}{|\nabla S|}$ and $\frac{\nabla N}{|\nabla N|}$. The general form of this new class of equations is given by:
\begin{align*}
\partial_t \rho &= D_\rho \Delta \rho - \textrm{div}(\rho u_S + \rho u_N) , \\
\partial_t  S &= D_S \Delta S - \alpha S + \beta \rho, \\
\partial_t  N &= D_N \Delta N - \gamma \rho N,
\end{align*}
where 
\begin{align*}
u_S &= \chi_S J(\partial_t  S, |\nabla S|) \frac{\nabla S}{|\nabla S|},\quad u_N = \chi_N J(\partial_t  N, |\nabla N|) \frac{\nabla N}{|\nabla N|}.
\end{align*}

In \cite{calvezPerthame2010}, the authors derive an explicit expression for traveling pulses and prove the existence of steady states, corresponding to the physical clustering of bacteria (e.g., in nutrient-depleted conditions). Their numerical simulations show strong agreement with experimental data. The use of bounded fluxes, in contrast to Keller-Segel, provides key advantages for proving global well-posedness and for analyzing traveling pulses and long-time dynamics. Overall, this model more accurately captures the experimentally observed traveling bands and offers improvements  in describing bacterial movement.

More recently, Calvez and Hoffmann \cite{CalvezHoff2020} considered a simplified version  (absence of nutrient and quasi-stationary chemoattractant signal)
\begin{align}\label{CH20}
\left\{\begin{array}{rcll}
\pa_t\rho - \Delta\rho + \chi\Div\left(\rho \frac{\na S}{|\na S|} \right) &=& 0,\\ %
\vspace{2mm}
-\Delta S+\alpha S&=&\rho, %
\end{array}
\right.
\end{align}
and perform nonlinear stability analysis for the one dimensional case. In particular, they show that if the initial data is near, in some norm, to the equilibrium function $ \rho_\infty = \frac{\chi}{2}e^{-\chi|x-x_0|}$, then the corresponding solution converges to $ \rho_\infty$ exponentially fast. The constant $\alpha \ge 0$ denotes the natural decay of the chemical component and $\chi$ the chemoattractant sensitivity. {{A kinetic formulation and related stability analysis of (\ref{CH20}) in one dimensional space has been considered in  \cite{MW17, EY23, EY24, CFH24}. 

In this manuscript we analyze a model obtained from (\ref{CH20}) via the singular limit in $\alpha$. After rescaling $ S \mapsto S/\alpha$, the limit $\alpha\rightarrow\infty$ yields the following {\em{quasi-linear parabolic}} problem: 

\begin{equation}\label{original.eq}
		\pa_t\rho - \Delta\rho + \chi\Div\left(\rho \frac{\na\rho}{|\na\rho|} \right) = 0.%
\end{equation}

Unlike (\ref{CH20}),  the advection  in (\ref{original.eq}) depends on the gradient of the density, making it a competing term with the diffusion. Note in fact that 
$$
\Delta\rho  -  \chi\Div\left(\rho \frac{\na\rho}{|\na\rho|} \right) = \Div \left( \left( 1 - \frac{\chi \rho}{|\nabla \rho|}\right) \nabla \rho \right), 
$$
which shows a negative diffusion for profiles that have $\chi \rho \ge  |\nabla \rho|$. For example, Gaussian  functions $e^{-|x|^2}$  will make the diffusion coefficient negative around $|x|=0$.   Problem (\ref{original.eq}) is diffusive as long as  $\lvert\nabla\rho\rvert>\chi\rho$, and the steady-state is governed by the equation 
\begin{equation}\label{steadystate.eq}
\lvert\nabla\rho_\infty\rvert = \chi\rho_\infty.
\end{equation}
{
One solution to \eqref{steadystate.eq} is
\begin{equation}\label{steadystate}
\rho_\infty(x) = c e^{-\chi |x-x_0|},
\end{equation} 
for some $c$ and $x_0$ determined by the conservation of mass and property of invariance by translation of the equation. Other admissible (i.e.~belonging to $L^1(\R^d)$) solutions to \eqref{steadystate.eq} exist;
for example, for $d=1$ another solution is the ``multi-peak profile''
\begin{equation}\label{steadystate.2}
\rho_\infty(x) = \max \{ c_0\exp(-\chi|x-x_0|), c_1\exp(-\chi|x-x_1|) , ... c_k\exp(-\chi|x-x_k|) \}, 
\end{equation}
for a suitable choice of $k\geq 0$, $c_0 \ldots c_{k}>0$, and $x_0 \ldots x_{k+1}\in\R$, %
while for $d\geq 2$ other possible solution to (\ref{steadystate.eq})  is the ``factorized profile''
\begin{equation}\label{steadystate.3}
\rho_\infty(x) = c\exp\left( -\frac{\chi}{\sqrt{d}}\sum_{i=1}^d |x_i - x_{0,i}| \right) ,
\end{equation}
which equals the product of (suitably rescaled) single variable steady states.
}
The proof of existence of solution to (\ref{original.eq}) is still out of reach. The main difficulty is the control on the diffusion coefficient, which can change sign along the flow and destroy the parabolic nature of the equation. We believe that if the initial data is such that $\left(1-\frac{\chi\rho_{in}}{\lvert\nabla\rho_{in}\rvert}\right)$ is positive, the quantity $\left(1-\frac{\chi\rho}{\lvert\nabla\rho\rvert}\right)$  will remain non-negative throughout the evolution. This is, however, still a claim. Problem (\ref{original.eq}) has a more tractable form in one dimension 
$$
\pa_t\rho - \partial_{xx}\rho + \chi \partial_x\left(\rho  \;\textrm{sign}(\rho_x) \right) = 0, 
$$
and is the subject of current investigation \cite{GKIZ}.

As a first attempt to understand (\ref{original.eq}), we introduce an alternative model that retains some of the essential characteristics of (\ref{original.eq}) while being mathematically more tractable.  Specifically,  we consider
\begin{equation}
	\label{1}
	\left\{\begin{array}{rcll}
		\pa_t\rho - \Div\left(\left(1-\frac{\chi\rho}{\lvert\nabla\rho\rvert}\right)_+\nabla\rho\right)&=&0, &\quad\mbox{in }\Rd\times (0,\infty),\vspace{2mm}\\
		\vspace{2mm}
		\rho(0) &=& \rho_{in}, &\quad\mbox{in }\Rd,
	\end{array}
	\right.
\end{equation}
where $(\cdot)_+$ denotes $\max\{\cdot, 0 \}$.  Equation (\ref{1}) is a quasilinear differential equations. While this nonlinearity still poses major challenges in the construction of solutions, let's explain why (\ref{1}) is more within-reach than (\ref{original.eq}). The most important property of  (\ref{1}) is that the operator 
$$
\left(1-\frac{\chi}{|w|}\right)_+ w, \quad w \in \mathbb{R}^d,
$$
is monotone (for any constant $\chi >0$). More precisely, 
\begin{align}\label{monotonicity_prop}
\left[\left(1-\frac{\chi}{|w|}\right)_+ w  - \left(1-\frac{\chi}{|z|}\right)_+z\right]  \cdot ( w-z) \;  {{\ge}}\;  0, \quad \quad \forall \;  w,z\in \mathbb{R}^d. 
\end{align}
Note that the monotonicity is lost if we remove the positive part. 
This monotonicity  will  prove fundamental in two steps:  (i) in the construction of  solutions to a viscous version of (\ref{1}), and, most importantly,  (ii) in the vanishing viscosity limit.  The crux of the analysis will be the vanishing viscosity limit $\varepsilon \to 0$ and the corresponding identification of the limit of the nonlinear term 
$$
\left(1-\frac{\chi}{\lvert\nabla \log \rhoe\rvert}\right)_+\nabla \rhoe. 
$$
We will expand on this point after we summarize our main result: 
\begin{thm}%
\label{thr.ex}
Let $\rho_{{\rm in}}\in L^p(\Rd)\cap L^1_2(\Rd)$, $d\ge 1$, with $p>d$, $\rho_{{\rm in}} > 0$ a.e.~in $\Rd$  and $\int_{\Rd}\rho_{{\rm in}}|\log{\rho_{{\rm in}}}|<\infty$. For every $T>0$ there exists an unique solution $\rho:[0,T)\times\Rd\rightarrow \R$ to (\ref{1}) such that
\begin{align}\label{weak.formulation.1}
	\int_0^T\int_{\Rd}\partial_t\rho\,\varphi\,\dx \dt=-\int_0^T\int_{\Rd}\left(1-\frac{\chi \rho}{\lvert\nabla\rho\rvert}\right)_+\nabla\rho\nabla\varphi\,\dx \dt\quad\mbox{for } \;\;\forall\varphi\in L^2(0,T;H^1(\Rd)),
\end{align}
with $\rho \to \rho_{{\rm in}}$ strongly in $L^p(\Rd)$ as $t \to 0$. This solution belongs to $C(0,T,L^p(\mathbb{R}^d) \cap L^p(t,T, C^\alpha(\mathbb{R}^d))$ for all $t>0$ and has the following properties:
\begin{enumerate}
	\item[{\it (i)}] Mass conservation:
	\begin{equation*}
		\|\rho(t)\|_{L^1(\Rd)}=\|\rho_{{\rm in}}\|_{L^1(\Rd)}.
	\end{equation*}
	\item[{\it (ii)}] $L^p$ norm bound:
	\begin{equation*}
		\|\rho(t)\|_{L^p(\Rd)}\leq\|\rho_{{\rm in}}\|_{L^p(\Rd)}.
	\end{equation*}
	\item[{\it (iii)}] Moment evolution:
	\begin{equation*}
		\int_{\Rd}\rho(t)(1+|x|^2) \dx\leq e^{C\;t}\int_{\Rd}\rho_{{\rm in}}(1+|x|^2) \dx.
	\end{equation*}
	\item[{\it (iv)}] Entropy bound:
	\begin{align*}
		(\int_{\Rd }\rho|\log\rho|\dx)(T)+\int_0^T\int_{\Rd}\frac{\lvert\nabla\rhoe\rvert^2}{\rhoe}\dx\dt\leq \bar{C}[\rho_{{\rm in}}].
	\end{align*}
\item[{\it (iv)}] $L^{\infty}$ smoothing:
\begin{align*}
	 \sup_{t>0} \|\rho (t) \|_{L^{\infty}(\Rd)}\leq C\|\rho_{{\rm in}}\|_{L^p(\Rd)}\left(1+\frac{1}{t^{\frac{d}{2p}}}\right).
\end{align*}
\item[{\it (v)}] $L^{p}$ bound of the gradient:
\begin{align*}
	\|\nabla\rho\|_{L^p((t,T)\times \Rd)}< C(\rho_{{\rm in}}),\quad \forall \;  t>0.
\end{align*}
\end{enumerate}
\end{thm}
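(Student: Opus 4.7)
\emph{Strategy.} My plan is to construct the solution by a vanishing viscosity approximation, exploiting the factorization $\bigl(1-\tfrac{\chi\rho}{|\nabla\rho|}\bigr)_+\nabla\rho = \rho\,A(\nabla\log\rho)$ with $A(w)=(1-\chi/|w|)_+ w$, so that the monotonicity \eqref{monotonicity_prop} of $A$ is available in its cleanest form. Concretely, I would first solve
$$
\pa_t\rhoe - \eps\Delta\rhoe - \Div\Bigl[\Bigl(1-\tfrac{\chi\rhoe}{|\nabla\rhoe|}\Bigr)_+\nabla\rhoe\Bigr]=0 \qquad \text{in } B_R\times(0,T),
$$
with Neumann data on $\pa B_R$ and a mollified, strictly positive initial datum, and then produce the solution to \eqref{1} via the joint limit $\eps\to 0$, $R\to\infty$. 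For each fixed $\eps,R$, the approximate problem has a unique solution obtained either by an implicit-Euler time discretization combined with Browder--Minty at each stationary step, or by a Schauder fixed point after freezing the $\rho$-dependence of the flux; the additional $-\eps\Delta$ term makes the full operator strictly monotone on $L^2(0,T;H^1)$.

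\emph{A priori estimates.} I would derive the estimates of Theorem \ref{thr.ex} in the order stated: mass conservation from testing with the constant $1$; the $L^p$ bound from testing with $p\rhoe^{p-1}$, where the nonlinear flux yields a non-negative contribution (apply \eqref{monotonicity_prop} with $z=0$) that can be discarded; the second moment from testing with $1+|x|^2$ together with the pointwise estimate $\bigl|\bigl(1-\tfrac{\chi\rhoe}{|\nabla\rhoe|}\bigr)_+\nabla\rhoe\bigr|\le|\nabla\rhoe|$; and the entropy bound from testing with $\log\rhoe$, where the elementary inequality $A(w)\cdot w = (1-\chi/|w|)_+|w|^2\ge \tfrac12|w|^2-2\chi^2$ furnishes the uniform $L^2$-in-time control of $|\nabla\rhoe|^2/\rhoe$. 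The $L^\infty$-smoothing {\it (iv)} follows from a Moser iteration on $\rhoe^q$, in which the monotone flux contributes a non-negative semidefinite quadratic form, the extra $\eps$-diffusion provides $\eps\|\nabla\rhoe^{q/2}\|_{L^2}^2$, and the assumption $p>d$ is used in the Sobolev embedding $H^1\hookrightarrow L^{2^*}$ to close the iteration uniformly in $\eps$. The $L^p$-gradient bound {\it (v)} is obtained from standard parabolic regularity once $\rho\in L^\infty$ is known. Aubin--Lions applied to $\sqrt{\rhoe}$ then gives strong compactness of $\rhoe$ in $L^q_{\mathrm{loc}}$ for every finite $q$, along with weak compactness of $\nabla\rhoe$ in $L^p_{\mathrm{loc}}$ and of the flux in $L^{p'}$.

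\emph{Main obstacle: identification of the nonlinear limit.} The hardest step is to show that
$$
\xi := \lim_{\eps\to 0}\Bigl(1-\tfrac{\chi\rhoe}{|\nabla\rhoe|}\Bigr)_+\nabla\rhoe \;=\; \rho\,A(\nabla\log\rho),
$$
given that only weak convergence of $\nabla\rhoe$ is available. I plan to apply Minty's trick in the $\nabla\log\rho$ variable: for any smooth test field $\psi$, \eqref{monotonicity_prop} yields the pointwise inequality
$$
\bigl(\rhoe A(\nabla\log\rhoe)-\rhoe A(\psi)\bigr)\cdot(\nabla\log\rhoe-\psi)\ge 0 .
$$
Integrating and passing to the limit using the strong $L^q$-convergence of $\rhoe$, the entropy identity obtained by testing the limit equation with $\log\rho$ (the $\eps$-viscous contribution $\eps\int|\nabla\rhoe|^2$ being controlled by weak lower semicontinuity), and then letting $\psi$ approximate $\nabla\log\rho$ in the appropriate topology, one identifies $\xi=\rho A(\nabla\log\rho)$. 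Uniqueness in the limit class is then obtained by a second use of monotonicity: setting $w=\rho_1-\rho_2$ and testing the difference of the equations with $w$, the principal part splits into a non-negative monotone-in-$\nabla\rho$ piece and a Lipschitz-in-$\rho$ remainder (using $|\pa_\rho G|\le\chi$) which is absorbed via Grönwall, relying on the $L^\infty$ and $W^{1,p}$ bounds from {\it (iv)}--{\it (v)}. Finally, Hölder regularity $\rho\in L^p(t,T;C^\alpha)$ and the time continuity $\rho\in C(0,T;L^p)$ follow from De Giorgi--Nash estimates applied to the limit equation once $\rho\in L^\infty$ is known.
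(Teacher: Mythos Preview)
Your overall strategy for constructing the approximate solutions and deriving the a~priori estimates is close to the paper's, though the paper adds $-\eps\rho$ as well as $-\eps\Delta\rho$, works directly on $\R^d$, and obtains the $L^\infty$ smoothing via a De~Giorgi level-set iteration rather than Moser. These are minor variations.

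\medskip
\textbf{Limit identification: a genuinely different route.}
For the passage $\eps\to 0$ in the nonlinear flux, you propose a Minty argument in the variable $\nabla\log\rho$, using the entropy identity as the energy equality. The paper does \emph{not} use Minty; instead it computes the Boltzmann relative entropy $H[\rhoe\vert\rhod]$ between two viscous solutions with different viscosities and shows that the associated dissipation functional tends to zero as $\eps,\delta\to 0$. From this the paper extracts a.e.~convergence of $\nabla\rhoe$ on the nondegenerate set $\{|\nabla\rho|>\chi\rho\}$, which suffices to identify the weak limit of the flux pointwise. Your Minty route is plausible and, if carried out carefully, should also identify the limit: the key steps are (i) justifying the entropy identity for the \emph{limit} equation $\pa_t\rho=\Div\xi$ by testing with a regularized logarithm $\log(\rho+\sigma)-\log\sigma$ and using mass conservation to cancel the divergent $\log\sigma$ terms, and (ii) working in the weighted space $L^2(\rho\,\dx\dt)$, where $A$ is $1$-Lipschitz and maximal monotone. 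The paper's approach is heavier but yields, as a by-product, a.e.~convergence of gradients, which Minty does not give.

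\medskip
\textbf{Uniqueness: a real gap.}
Your proposed $L^2$ argument for uniqueness does not close. Writing $G(\rho,p)=\bigl(1-\tfrac{\chi\rho}{|p|}\bigr)_+ p$ and $w=\rho_1-\rho_2$, the split
\[
G(\rho_1,\nabla\rho_1)-G(\rho_2,\nabla\rho_2)
=[G(\rho_1,\nabla\rho_1)-G(\rho_1,\nabla\rho_2)]
+[G(\rho_1,\nabla\rho_2)-G(\rho_2,\nabla\rho_2)]
\]
gives, after testing with $w$,
\[
\tfrac12\tfrac{d}{dt}\|w\|_{L^2}^2 + (\text{monotone, }\geq 0)
\;\leq\; \chi\IRd |w|\,|\nabla w|\,\dx .
\]
The monotone term is \emph{degenerate}: it vanishes on $\{|\nabla\rho_1|\leq\chi\rho_1\}\cap\{|\nabla\rho_2|\leq\chi\rho_1\}$ and therefore provides no coercivity with which to absorb $\|\nabla w\|$. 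The $L^\infty$ and $W^{1,p}$ bounds from {\it (iv)}--{\it (v)} do not rescue this: the best they yield (for $p>2$, via $\|w\|_{p'}\leq\|w\|_1^{\theta}\|w\|_2^{1-\theta}$ and $\|\nabla w\|_p\in L^p_t$) is an inequality of the type $\tfrac{d}{dt}\|w\|_2^2 \leq C(t)\|w\|_2^{1-\theta}$ with $0<1-\theta<2$, which admits nontrivial solutions emanating from $w(0)=0$ and hence gives no uniqueness. This is precisely why the paper uses the \emph{relative entropy} $H[\rho_1\vert\rho_2]$ for uniqueness as well: the computation (Section~4.5 in the paper, with the $\sigma$-regularized test functions $\log\tfrac{\rho_1+\sigma}{\rho_2+\sigma}$ and $\tfrac{\rho_1+\sigma}{\rho_2+\sigma}-1$) produces a dissipation that is itself nonnegative, so one concludes $H[\rho_1\vert\rho_2](T)\leq 0$ directly, without any Gr\"onwall absorption step. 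Your monotonicity intuition is correct at the level of the flux identification, but for uniqueness you need the entropy structure, not the $L^2$ one.

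\medskip
As a minor point, the H\"older regularity $\rho\in L^p(t,T;C^\alpha)$ does not come from De~Giorgi--Nash (the diffusion coefficient here is degenerate, not uniformly elliptic); it follows from Sobolev embedding $W^{1,p}(\R^d)\hookrightarrow C^{0,1-d/p}$ together with the $L^p$ gradient bound {\it (v)} and the $L^\infty$ bound {\it (iv)}.
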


A common strategy for proving existence in nonlinear problems involves initially linearizing the equation and then applying fixed-point or contraction mapping arguments. If we were to follow this approach, we would begin by linearizing the diffusion term in (\ref{weak.formulation.1}) as
$$
\left(1-\chi \frac{z}{\lvert \nabla z \rvert}\right)_+\nabla \rho.
$$
However, because this linearization includes the term $|\nabla z|$, the functional framework for any fixed-point or contraction method would necessarily involve the Sobolev space $H^1$. The argument would also require regularity estimates in a higher space, typically $H^2$. This poses a problem, as we do not expect solutions to be in $H^2$. In fact, at points where $|\nabla \rho| = 0$, we anticipate a loss of smoothness. Some steady states of type (\ref{steadystate}) are no more regular than Lipschitz continuous. Therefore, this standard linearization route is not suitable here.

Instead, our approach relies on the  {\em{monotonicity}} property (\ref{monotonicity_prop}), through a different form of linearization. Specifically, we employ a semi-linearization of the diffusion term in (\ref{weak.formulation.1}):
$$
\left(1-\chi \frac{z}{\lvert \nabla \rho \rvert}\right)_+\nabla \rho.
$$
This operator in monotone in the variable $\nabla \rho$, see (\ref{monotonicity_prop}). Existence of solutions to the {\em{semi-linearized}} problem follows then from the theory of monotone operators, after adding some viscosity terms $\varepsilon( \Delta \rho - \rho)$ to  secure uniform ellipticity. In this framework, fixed-point arguments will be carried out in $L^2$ and require $H^1$-type of estimates for $\rho$. 

The next crucial step involves taking the vanishing viscosity limit, particularly identifying the limit of the nonlinear flux:
$$
\lim_{ \varepsilon \to 0}\left(1-\frac{\chi}{\lvert\nabla \log \rhoe\rvert}\right)_+\nabla \rhoe. 
$$
A common strategy  would be to establish strong convergence (say in $L^2$) of either $\nabla \rhoe$ or $\left(1-\frac{\chi}{\lvert\nabla \log \rhoe\rvert}\right)_+$. Since both terms involve first derivatives of $\rho$, which may be discontinuous, making strong convergence highly non trivial to prove.

 To overcome this, we use the {\em{Boltzmann entropy}} between two viscous  solutions. The Boltzmann relative entropy between two solutions $\rhoe$ and $ \rhod$ is defined as
$$
H[\rhoe, \rhod] = \int_{\Rd} \rhod \left[\frac{\rhoe}{\rhod}\log{\frac{\rhoe}{\rhod}}-\frac{\rhoe}{\rhod}+1\right]\; dx.
$$
This functional bounds the  in $L^1$-distance between $\rhoe$ and $\rhod$. Direct computations show that 
\begin{align*}
\frac{d}{dt} H[\rhoe, \rhod] =&  -\frac{1}{2}\IRd \rhoe\left[\left(1-\frac{\chi \rhoe}{|\na\rhoe|}\right)_+-\left(1-\frac{\chi \rhod}{|\na\rhod|}\right)_+\right]^2\dx\\
	& -\frac{1}{2} \IRd \rhoe\left\vert\na\log\left(\frac{\rhoe }{\rhod}\right)\right\vert^2\left[\left(1-\frac{\chi\rhoe}{\lvert\na\rhoe\rvert}\right)_+\hspace{-2.5mm}+\left(1-\frac{\chi \rhod}{\lvert\na\rhod\rvert}\right)_+\right]\dx \\
 & + \textrm{error terms},
\end{align*}
where the error terms derive from the viscosities $\varepsilon \Delta \rhoe$ and $\delta \Delta \rhod$. As $\varepsilon$ and $\delta$ approach zero, the error terms vanish, and consequently  
$$
-\frac{1}{2}\int_0^T \IRd \rhoe\left[\left(1-\frac{\chi \rhoe}{|\na\rhoe|}\right)_+-\left(1-\frac{\chi \rhod}{|\na\rhod|}\right)_+\right]^2\dx \dt \to 0,
$$
and 
$$
-\frac{1}{2}\int_0^T \IRd \rhoe\left\vert\na\log\left(\frac{\rhoe }{\rhod}\right)\right\vert^2\left[\left(1-\frac{\chi\rhoe}{\lvert\na\rhoe\rvert}\right)_+\hspace{-2.5mm}+\left(1-\frac{\chi \rhod}{\lvert\na\rhod\rvert}\right)_+\right]\dx \dt \to 0. 
$$
Note that $H[\rhoe, \rhod](0) =0$ because $\rhoe$and  $\rhod$ have same initial data. The main point here is that the two limits above will imply that, as $\varepsilon$ and $\delta$ approach zero,  
$$
\left(1-\frac{\chi}{\lvert\nabla \log \rhoe\rvert}\right)_+\nabla \rhoe \rightharpoonup \left(1-\frac{\chi}{\lvert\nabla \log \rho\rvert}\right)_+\nabla \rho,
$$
weakly in $L^2$. The identification of this limit is the content of Section \ref{sec:entropy_limit} and Section \ref{sec4.2}. 
Almost automatically, the relative entropy argument will also provide uniqueness of the solution for the system without viscosity. 


  
  The question of whether one can obtain more regularity is left for future investigation. Based on steady-state solutions of the form (\ref{steadystate}), the maximal regularity one should expect is Lipschitz continuity. The solution that we obtain is bounded and belongs to $L^p(t,T, C^{0,\alpha}(\mathbb{R}^d))$ for any $t>0$. Potential insights might be drawn from existing results on the $p$-Laplacian equation \cite{DB86, GSV10}. However,  the structure of our degenerate set is slightly more complicated than the one of the $p$-Laplacian.  A De Giorgi argument for H\"older regularity might pose challenges, since our diffusion coefficient is degenerate, probably in a set of positive measure, and depends in a nonlinear way on $\nabla \log \rho$. 
  
  Another very interesting open question is the long time dynamics for (\ref{1}). While steady states of the form (\ref{steadystate}) are natural candidates, the set of possible profiles also include all functions satisfying $ | \nabla \log \rho_{\infty} | \le \chi$.

\vspace{0.3cm}

{\bf{Organization of the paper:}} The rest of the paper is organized as follows: in the next section we present the relevant a-priori estimates. The proof of Theorem  \ref{thr.ex} is in Section \ref{Proof section} and Section \ref{sec_viscosity}: in Section \ref{Proof section} we provide the proof of existence of solutions to the model equipped with extra viscosity. In Section \ref{sec_viscosity} we perform the vanishing viscosity limit and show uniqueness of solution.

We will use the decomposition  
$$
f = f_+ - f_-, \quad f_+ := \max\{f,0\},\quad  f_- := -\min\{f,0\}.
$$

\section{A priori estimates}
This section is devoted to a-priori estimates of solutions to  (\ref{weak.formulation.1}).  We combine, without creating confusion, estimates for (\ref{weak.formulation.1}) and for its {\em{viscous}} version, namely  
\begin{align}\label{smooth.eq}
\int_0^T \int_{\Rd} \partial_t \rho_\eps \varphi dxdt = & - \int_0^T \int_{\Rd}\left( 1 - \frac{\chi \rhoe}{|\nabla \rhoe|}\right)_+ \nabla \rhoe \nabla \varphi \;dxdt \\ 
& - \varepsilon \int_0^T \int_{\Rd} \nabla \rhoe \nabla \varphi \;dxdt  - \varepsilon \int_0^T \int_{\Rd}  \rhoe  \varphi \;dxdt. \nonumber
\end{align}
Notice that (\ref{smooth.eq}) coincides with (\ref{weak.formulation.1}) if $\varepsilon =0$.  All our estimates in this section hold for any $\varepsilon \ge 0$.


\begin{lem}[Mass evolution]\label{mass decay lemma} Let $\rhoe$ be a weak solution to (\ref{smooth.eq}) for any $\varepsilon \ge 0$. It holds: 
\begin{align*}
	\int_{\Rd} \rhoe(t)=e^{-\eps t}\int_{\Rd}\rho_{in}.
\end{align*}

\end{lem}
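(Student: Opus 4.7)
The plan is to test the weak formulation of \eqref{smooth.eq} with a spatial cutoff function and then let the cutoff escape to infinity. Fix a smooth cutoff $\eta\in C_c^\infty(\R^d)$ with $\eta\equiv 1$ on $B_1$, $\eta\equiv 0$ outside $B_2$, and $|\nabla\eta|\le C$, and set $\varphi_R(x):=\eta(x/R)$, so that $\varphi_R\in H^1(\R^d)$ with $|\nabla\varphi_R|\le C/R$ and support in $B_{2R}$. For a smooth test function $\psi\in C_c^\infty((0,T))$ in time, we plug $\varphi(x,t):=\varphi_R(x)\psi(t)\in L^2(0,T;H^1(\R^d))$ into \eqref{smooth.eq}.

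The resulting identity reads
\begin{align*}
\int_0^T\!\!\int_{\R^d}\pa_t\rhoe\,\varphi_R\psi\,\dx\dt
=&-\int_0^T\!\!\int_{\R^d}\left(1-\frac{\chi\rhoe}{|\na\rhoe|}\right)_+\na\rhoe\cdot\na\varphi_R\,\psi\,\dx\dt\\
&-\eps\int_0^T\!\!\int_{\R^d}\na\rhoe\cdot\na\varphi_R\,\psi\,\dx\dt
-\eps\int_0^T\!\!\int_{\R^d}\rhoe\,\varphi_R\,\psi\,\dx\dt.
\end{align*}
Next I would send $R\to\infty$. The first two terms on the right vanish: on the support of $\na\varphi_R$ we have $|\na\varphi_R|\le C/R$, and by the flux bound $\left(1-\frac{\chi\rhoe}{|\na\rhoe|}\right)_+|\na\rhoe|\le|\na\rhoe|$ combined with the a priori $L^2$ control of $\na\rhoe$ and of $\na\sqrt{\rhoe}$ (hence of the flux through the entropy estimate (iv) of Theorem~\ref{thr.ex}), both integrals are $O(1/R)$. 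The third term converges by dominated convergence to $\eps\int_0^T\psi(t)\int_{\R^d}\rhoe(t)\,\dx\dt$, and the left-hand side converges, again by dominated convergence and the $L^1$-bound on $\rhoe$, to $\int_0^T\psi'(t)\left(-\int_{\R^d}\rhoe(t)\,\dx\right)\dt$ after an integration by parts in time.

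Setting $m(t):=\int_{\R^d}\rhoe(t,x)\,\dx$, the identity obtained after passing to the limit reads
\begin{equation*}
\int_0^T m(t)\,\psi'(t)\,\dt = \eps\int_0^T m(t)\,\psi(t)\,\dt \qquad\forall\psi\in C_c^\infty((0,T)),
\end{equation*}
which is the weak ODE $m'(t)=-\eps m(t)$. Together with the initial condition $m(0)=\|\rhoin\|_{L^1}$ (coming from the strong $L^p$ convergence $\rhoe(t)\to\rhoin$ as $t\to 0$ and mass preservation under this convergence on any bounded set, combined with the uniform $L^1$ moment bound to exclude mass escape at infinity), we conclude $m(t)=e^{-\eps t}\,m(0)$.

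The only non-routine step is justifying that the cutoff flux term vanishes as $R\to\infty$; this is where one needs a global integrability property of $\na\rhoe$ (or, equivalently, of the truncated flux). In the viscous setting $\eps>0$ this is immediate from the $L^2_t H^1_x$ regularity of $\rhoe$; in the limit case $\eps=0$ it follows from property (v) of Theorem~\ref{thr.ex} together with a short cutoff near $t=0$, which is consistent with the fact that the statement is applied to solutions built in the subsequent sections.
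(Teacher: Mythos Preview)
Your overall strategy---test against a spatial cutoff, let it exhaust $\R^d$, and read off the ODE $m'=-\eps m$---matches the paper's. The gap is in the step you flag as ``non-routine'': you assert that the flux integral
\[
\int_0^T\!\!\int_{\R^d}\left(1-\frac{\chi\rhoe}{|\na\rhoe|}\right)_+\na\rhoe\cdot\na\varphi_R\,\psi\,\dx\dt
\]
is $O(1/R)$, but the estimates you invoke do not deliver this in dimension $d\geq 3$, and the stronger ones you cite are logically downstream of Lemma~\ref{mass decay lemma}. Using only $\na\rhoe\in L^2((0,T)\times\R^d)$ one gets
\[
\Big|\int (\,\cdot\,)_+\na\rhoe\cdot\na\varphi_R\Big|
\leq \frac{C}{R}\,\|\na\rhoe\|_{L^2}\,|B_{2R}\setminus B_R|^{1/2}
\sim R^{(d-2)/2},
\]
which does not vanish for $d\geq 3$. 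The rescue via $\na\sqrt{\rhoe}\in L^2$ would need $\|\sqrt{\rhoe}\|_{L^2}=\|\rhoe\|_{L^1}^{1/2}<\infty$, which is exactly the content of the lemma; moreover the entropy bound (Proposition~\ref{grad.log.estimate}) is proved \emph{using} the mass identity~\eqref{mass decay comp}, so invoking it here is circular. The same issue afflicts your dominated-convergence step for the mass term: you assume an $L^1$ bound on $\rhoe$ that has not yet been established. Using property~(v) (the $L^p$ gradient bound with $p>d$) does not help either, since H\"older on the annulus yields a factor $R^{d(1-1/p)-1}$, which diverges for $p>d\geq 2$.

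The paper avoids this circularity by a structural trick you are missing. It decomposes the flux as
\[
\left(1-\frac{\chi\rhoe}{|\na\rhoe|}\right)_+\na\rhoe
=\na\rhoe-\chi\rhoe\frac{\na\rhoe}{|\na\rhoe|}
-\left(1-\frac{\chi\rhoe}{|\na\rhoe|}\right)_-\na\rhoe ,
\]
and observes that on the relevant set the last two pieces are bounded pointwise by a multiple of $\rhoe$ (no gradient!), while the $\na\rhoe$ piece is integrated by parts to $\Delta\varphi$. Choosing $\varphi=(1+|x|^2/R^2)^{-\alpha/2}$ so that $|\na\varphi|,|\Delta\varphi|\leq \tfrac{C}{R}\varphi$, every term on the right-hand side is controlled by $\tfrac{C}{R}\int\rhoe\varphi$, which yields a closed Gronwall inequality in $\int\rhoe\varphi$ and hence $\int\rhoe\leq\int\rho_{in}$ \emph{before} any other estimate is used. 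Only after that does the paper pass to the limit in the weak formulation to obtain the exact identity. To repair your argument you should incorporate this decomposition (or equivalently first prove Proposition~\ref{moment lemma}, whose proof is self-contained for the same reason, and use it to secure $\rhoe\in L^1$).
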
 
\begin{proof}
	Testing \eqref{smooth.eq} with the test function $\varphi=\frac{1}{\left(1+\frac{|x|^2}{R^2}\right)^{\alpha/2}}$, with some $\alpha>{d}$, we get 
	\begin{align}\label{mass-proof-1}
		(\int_{\Rd} \rhoe\varphi\dx)(T) = &\;(1+\eps)\int_0^T\hspace{-2mm}\int_{\Rd} \rhoe \Delta\varphi\dx\dt+\chi\int_0^T\hspace{-2mm}\int_{\Rd}\rhoe\frac{\nabla\rhoe}{|\nabla\rhoe|}\nabla\varphi\dx\dt\\
		&\,-\int_0^T\hspace{-2mm}\int_{\Rd}\left(1-\frac{\chi \rhoe}{\lvert\nabla\rhoe\rvert}\right)_-\nabla\rhoe\nabla\varphi\dx\dt-\eps\int_0^T\hspace{-2mm}\int_{\Rd} \rhoe\varphi\dx\dt+\int_{\Rd}\rho_{in}\varphi \dx\nonumber.
	\end{align}
	Here we have rewritten 
	\begin{align}\label{alternative_form}
	\left( 1 - \frac{\chi \rhoe}{|\nabla \rhoe|}\right)_+ \nabla \rhoe  = \nabla \rhoe - \frac{\chi \rhoe }{|\nabla \rhoe|}\nabla \rhoe - \left( 1 - \frac{\chi \rhoe}{|\nabla \rhoe|}\right)_- \nabla \rhoe. 
	\end{align}
Notice that 
\begin{align*}
	-&\int_0^T\hspace{-2mm}\int_{\Rd}\left(1-\frac{\chi \rhoe}{\lvert\nabla\rhoe\rvert}\right)_-\nabla\rhoe\nabla\varphi\dx\dt=\int_0^T\hspace{-2mm}\int_{\Rd\cap\{|\na\rhoe|\leq\chi\rhoe\}}\left(1-\frac{\chi \rhoe}{\lvert\nabla\rhoe\rvert}\right)\nabla\rhoe\nabla\varphi\dx\dt\\
	&\leq\int_0^T\hspace{-2mm}\int_{\Rd\cap\{|\na\rhoe|\leq\chi\rhoe\}}\nabla\rhoe\nabla\varphi\dx\dt-\chi\int_0^T\hspace{-2mm}\int_{\Rd\cap\{|\na\rhoe|\leq\chi\rhoe\}}\rhoe\frac{\nabla\rhoe}{|\na\rhoe|}\nabla\varphi\dx\dt\\
	&\leq {(1+\chi)}\int_0^T\hspace{-2mm}\int_{\Rd\cap\{|\na\rhoe|\leq\chi\rhoe\}}\rhoe|\nabla\varphi|\dx\dt.
\end{align*}
So, introducing the estimate above in \eqref{mass-proof-1} we get
	\begin{align*}
		(\int_{\Rd} \rhoe\varphi\dx)(T)\leq&(1+\eps)\int_0^T\hspace{-2mm}\int_{\Rd} \rhoe \Delta\varphi\dx\dt+(2\chi+1)\int_0^T\hspace{-2mm}\int_{\Rd}\rhoe\left|\nabla\varphi\right|\dx\dt+\int_{\Rd}\rho_{in}\varphi \dx\\
		=&\;(1+\eps)\int_0^T\hspace{-2mm}\int_{\Rd}\rhoe\left[\frac{\alpha(\frac{\alpha}{2}+1)\frac{|x|^2}{R^4}}{\left(1+\frac{|x|^2}{R^2}\right)^{\frac{\alpha}{2}+2}}-\frac{\alpha d\frac{1}{R^2}}{\left(1+\frac{|x|^2}{R^2}\right)^{\frac{\alpha}{2}+1}}\right]\dx\dt\\
		&+3\chi\int_0^T\hspace{-2mm}\int_{\Rd}\frac{\rhoe\alpha\frac{|x|}{R^2}}{\left(1+\frac{|x|^2}{R^2}\right)^{\frac{\alpha}{2}+1}}\dx\dt-\eps\int_0^T\hspace{-2mm}\int_{\Rd} \rhoe\varphi\dx\dt+\int_{\Rd}\rho_{in}\varphi \dx\\
		\leq&\;(1+\eps)\frac{\alpha}{2R}\int_0^T\hspace{-2mm}\int_{\Rd}\frac{\rhoe(\frac{\alpha}{2}+1-d)}{\left(1+\frac{|x|^2}{R^2}\right)^{\frac{\alpha}{2}+1}}\dx\dt+3\chi\frac{\alpha}{R}\int_0^T\hspace{-2mm}\int_{\Rd}\frac{\rhoe}{\left(1+\frac{|x|^2}{R^2}\right)^{\frac{\alpha}{2}}}\\
		&-\eps\int_0^T\hspace{-2mm}\int_{\Rd} \rhoe\varphi\dx\dt+\int_{\Rd}\rho_{in}\varphi \dx.
	\end{align*}
	Now, we choose $\alpha$ such that $\frac{\alpha}{2}+1-d>0$ and get
	\begin{align}
		(\int_{\Rd} \rhoe\varphi\dx)(T)\leq\;\frac{C}{R}\int_0^T\hspace{-2mm}\int_{\Rd} \rhoe\varphi\dx\dt-\eps\int_0^T\hspace{-2mm}\int_{\Rd} \rhoe\varphi\dx\dt+\int_{\Rd}\rho_{in}\varphi \dx,
	\end{align}
	where $C>0$.
	Gronwall's Lemma yields
	\begin{align*}
		(\int_{\Rd} \rhoe\varphi\dx)(T)\leq e^{\frac{C T}{R}}\int_{\Rd} \rho_{in}\varphi\dx\leq e^{\frac{C T}{R}}\int_{\Rd} \rho_{in}\dx.
	\end{align*}
	By monotone convergence we can take the limit $R\rightarrow\infty$ and conclude that $\int\rhoe\leq\int\rho_{in}$.
	
Now that we know that the mass of $\rhoe$ is bounded, we test again \eqref{smooth.eq} with $\varphi$ , but this time we pass directly to the limit $R\rightarrow\infty$ in \eqref{mass-proof-1} to find
\begin{align}\label{mass decay comp}
		(\int_{\Rd} \rhoe \dx)(T)=-\eps\int_0^T\hspace{-2mm}\int_{\Rd}\rhoe\dx+\int_{\Rd}\rho_{in}  \dx.
\end{align}
{The above identity can be rewritten as an initial value problem for an ODE, which can be easily solved, yielding}
\begin{align*}
	(\int_{\Rd} \rhoe \dx)(T)=e^{-\eps T}\int_{\Rd}\rho_{in}\dx, 
\end{align*}
which finishes the proof. 
\end{proof}
\begin{prop}[$L^p$, $H^1$ estimates]\label{Proposition LP} Let $\rhoe$ be a weak solution to (\ref{smooth.eq}) for any $\varepsilon \ge 0$, then

\begin{align*}
 \|\rhoe\|^p_{L^{\infty}((0,T);L^p(\Rd))}+ 
 \|\nabla\rhoe^{\frac{p}{2}}\|^2_{L^{2}((0,T)\times\Rd)}\leq C,
\end{align*} 
where $C$ only depends on $\|\rho_{in}\|_{L^p}, p$, $T$, and $\chi$.
\end{prop}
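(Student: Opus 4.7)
The plan is to test the weak formulation (\ref{smooth.eq}) with $\varphi = \rho_\varepsilon^{p-1}$, after a suitable truncation/regularization (e.g.\ replacing $\rho_\varepsilon^{p-1}$ by $(\rho_\varepsilon\wedge M)^{p-1}\cdot\varphi_R$ with $\varphi_R$ a smooth cutoff in space, and then passing $M,R\to\infty$ using Lemma \ref{mass decay lemma} and dominated convergence). For the viscous problem the test function lies in $L^2(0,T;H^1(\Rd))$ thanks to the extra ellipticity, so this step is technical but standard; for $\varepsilon=0$ the estimate will be understood a posteriori, i.e.\ inherited in the vanishing viscosity limit performed later in the paper.

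Once the test function is justified, the time derivative produces $\frac{1}{p}\frac{d}{dt}\int_{\Rd}\rho_\varepsilon^p\dx$, the two viscous terms yield $\frac{4(p-1)\varepsilon}{p^2}\int_{\Rd}|\nabla\rho_\varepsilon^{p/2}|^2\dx + \varepsilon\int_{\Rd}\rho_\varepsilon^p\dx$, and the nonlinear diffusion contributes $(p-1)\int_{\Rd}\bigl(1-\tfrac{\chi\rho_\varepsilon}{|\nabla\rho_\varepsilon|}\bigr)_+\rho_\varepsilon^{p-2}|\nabla\rho_\varepsilon|^2\dx$. The obstacle here is that the quasilinear coefficient can degenerate on large sets, so the nonlinear term alone does not control $|\nabla\rho_\varepsilon^{p/2}|^2$. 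To unlock it I use the same decomposition already exploited in Lemma \ref{mass decay lemma}, writing
\begin{equation*}
\Bigl(1-\frac{\chi\rho_\varepsilon}{|\nabla\rho_\varepsilon|}\Bigr)_{+}\rho_\varepsilon^{p-2}|\nabla\rho_\varepsilon|^2 = \rho_\varepsilon^{p-2}|\nabla\rho_\varepsilon|^2 - \chi\rho_\varepsilon^{p-1}|\nabla\rho_\varepsilon| + \Bigl(1-\frac{\chi\rho_\varepsilon}{|\nabla\rho_\varepsilon|}\Bigr)_{-}\rho_\varepsilon^{p-2}|\nabla\rho_\varepsilon|^2,
\end{equation*}
and dropping the last (nonnegative) term to obtain the lower bound $\rho_\varepsilon^{p-2}|\nabla\rho_\varepsilon|^2 - \chi\rho_\varepsilon^{p-1}|\nabla\rho_\varepsilon|$.

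The mixed term is then absorbed by Young's inequality,
\begin{equation*}
\chi(p-1)\int_{\Rd}\rho_\varepsilon^{p-1}|\nabla\rho_\varepsilon|\dx \le \frac{p-1}{2}\int_{\Rd}\rho_\varepsilon^{p-2}|\nabla\rho_\varepsilon|^2\dx + \frac{\chi^2(p-1)}{2}\int_{\Rd}\rho_\varepsilon^p\dx,
\end{equation*}
so that, using $\rho_\varepsilon^{p-2}|\nabla\rho_\varepsilon|^2 = \tfrac{4}{p^2}|\nabla\rho_\varepsilon^{p/2}|^2$, the resulting differential inequality reads
\begin{equation*}
\frac{d}{dt}\int_{\Rd}\rho_\varepsilon^p\dx + \frac{2(p-1)}{p}\int_{\Rd}|\nabla\rho_\varepsilon^{p/2}|^2\dx \le \frac{p(p-1)\chi^2}{2}\int_{\Rd}\rho_\varepsilon^p\dx.
\end{equation*}
A direct application of Gronwall's lemma yields the $L^\infty(0,T;L^p)$ bound with constant $e^{C(p,\chi)T}\|\rho_{\rm in}\|_{L^p}^p$, and integrating the inequality in time gives the stated $L^2$ bound on $\nabla\rho_\varepsilon^{p/2}$. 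The only genuine obstacle is step one, the approximation by admissible test functions; the computation itself is clean once that is in place, because the correct sign of $(1-\chi\rho_\varepsilon/|\nabla\rho_\varepsilon|)_-$ after moving it to the dissipation side is exactly what allows the degenerate diffusion to be replaced by the full Laplacian contribution up to a lower order term.
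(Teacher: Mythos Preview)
Your proposal is correct and follows essentially the same approach as the paper: test with $\rho_\varepsilon^{p-1}$, use the decomposition $(1-\chi\rho_\varepsilon/|\nabla\rho_\varepsilon|)_+ = 1 - \chi\rho_\varepsilon/|\nabla\rho_\varepsilon| + (1-\chi\rho_\varepsilon/|\nabla\rho_\varepsilon|)_-$ and drop the nonnegative $(\cdot)_-$ contribution on the dissipation side, then absorb the mixed term $\chi\rho_\varepsilon^{p-1}|\nabla\rho_\varepsilon|$ via Young. The only cosmetic difference is that the paper first observes directly that $\int\rho_\varepsilon^p$ is nonincreasing (since \emph{all} dissipation terms in the tested equation are nonnegative before any decomposition) and then uses this to bound $\int_0^T\!\int\rho_\varepsilon^p$ by $T\|\rho_{in}\|_p^p$, whereas you obtain the $L^\infty_t L^p_x$ bound via Gronwall; both routes give the stated estimate.
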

\begin{proof}
	First, notice that testing \eqref{smooth.eq} with $\rhoe^{p-1}$we get that $\int_{\Rd}\rhoe^p(t)\leq\int_{\Rd}\rho_{in}^p$. Then, we compute
	\begin{align*}
		\frac{1}{p}\int_{\Rd} \rhoe^p(t)\leq& -(p-1)\int_0^T\hspace{-2mm}\int_{\Rd}\rhoe^{p-2}\lvert\nabla\rhoe\rvert^2+\chi(p-1)\int_0^T\hspace{-2mm}\int_{\Rd}\rhoe^{p-1}\lvert\nabla\rhoe\rvert+\frac{1}{p}\int_{\Rd} \rho_{in}^p\\
		= &-\frac{2(p-1)}{p^2}\int_0^T\hspace{-2mm}\int_{\Rd}\lvert\nabla\rhoe^{\frac{p}{2}}\rvert^2+\chi^2(p-1)\int_0^T\hspace{-2mm}\int_{\Rd}\rhoe^p+\frac{1}{p}\int_{\Rd} \rho_{in}^p,
	\end{align*}
which yields
	\begin{align*}
		\|\rhoe\|^p_{L^{\infty}(0,T;L^p(\Rd))}+\|\nabla\rhoe^{\frac{p}{2}}\|^2_{L^{2}(0,T;L^2(\Rd))}\leq C,
	\end{align*}
	with the right hand side independent of $\eps$.
\end{proof}
\begin{prop}[Moment Evolution]\label{moment lemma}  Let $\rhoe$ be a weak solution to (\ref{smooth.eq}) for any $\varepsilon \ge 0$, then
\begin{align*}
	\int_{\Rd}\rhoe(t)(1+|x|^2) \dx\leq e^{C\;t}\int_{\Rd}\rho_{in}(1+|x|^2) \dx,
\end{align*}
where $C$ is independent of $\eps$.
\begin{proof}
We test the equation with $(1+|x|^2)\varphi$, where $\varphi=\frac{1}{\left(1+\frac{|x|^2}{R^2}\right)^{\alpha/2}}$ as in Lemma \ref{mass decay lemma}. Then, by Young's inequality, noticing that $|\na\varphi|, |\Delta\varphi|\leq\varphi$ and $0\leq\left(1-\frac{\chi\rhoe}{|\na\rhoe|}\right)_-|\na\rhoe|\leq2\chi\rhoe$, we get
\begin{align*}
	(\int_{\Rd}\rhoe(1+|x|^2)\varphi \dx)(T)\leq C\int_0^T\hspace{-2mm}\int_{\Rd}\rhoe(1+|x|^2)\varphi \dx\dt+\int_{\Rd}\rho_{in}(1+|x|^2)\varphi \dx.
\end{align*}
Gronwall's lemma gives 
\begin{align*}
	(\int_{\Rd}\rhoe(1+|x|^2)\varphi \dx)(T)\leq e^{CT}\int_{\Rd}\rho_{in}(1+|x|^2)\varphi \dx.
\end{align*}
By dominated convergence we pass to the limit in $R$ and get the result.
\end{proof}
\end{prop}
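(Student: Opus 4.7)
The natural approach is to test (\ref{smooth.eq}) against the moment weight $1+|x|^2$ and derive a differential inequality for $\int \rhoe (1+|x|^2) \dx$, to which Gronwall's lemma can be applied. Since the weight is not an admissible test function, I would proceed as in Lemma \ref{mass decay lemma}: multiply by the radial cutoff $\varphi_R(x) = (1+|x|^2/R^2)^{-\alpha/2}$ for $\alpha > d$ sufficiently large, use $\psi_R := (1+|x|^2)\varphi_R$ as test function, and pass to the limit $R \to \infty$ at the end by monotone (or dominated) convergence.

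To handle the nonlinear flux, I would re-use the decomposition (\ref{alternative_form})
$$
\left(1 - \frac{\chi \rhoe}{|\nabla \rhoe|}\right)_+ \nabla \rhoe = \nabla \rhoe - \chi \rhoe \frac{\nabla \rhoe}{|\nabla \rhoe|} - \left(1 - \frac{\chi \rhoe}{|\nabla \rhoe|}\right)_- \nabla \rhoe.
$$
After integration by parts the first piece yields $\int \rhoe \Delta \psi_R \dx$; the second is bounded by $\chi \int \rhoe |\nabla \psi_R| \dx$; and the third is harmless because on its support $\{|\nabla \rhoe|\le \chi \rhoe\}$ one has the pointwise identity $(1-\chi\rhoe/|\nabla\rhoe|)_-|\nabla\rhoe| = \chi\rhoe - |\nabla\rhoe| \le \chi\rhoe$, so it contributes at most another $\chi \int \rhoe |\nabla \psi_R| \dx$. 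The viscous corrections $\eps(\Delta \rhoe - \rhoe)$ produce analogous terms plus a non-positive contribution and create no additional difficulty.

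The key pointwise estimate is $|\nabla\psi_R|,\ |\Delta\psi_R| \le C\,\psi_R$ with $C$ independent of $R \ge 1$; this follows from a direct computation once $\alpha$ is chosen large enough, the cross terms between the $(1+|x|^2)$ factor and the derivatives of $\varphi_R$ being controlled by the fact that $|x|/(R^2+|x|^2) \le 1/(2R)$. Collecting everything yields
$$
\frac{d}{dt}\int_{\Rd} \rhoe \psi_R \dx \le C \int_{\Rd} \rhoe \psi_R \dx,
$$
with $C$ independent of $\eps$ and $R$; Gronwall's lemma followed by monotone convergence $\psi_R \nearrow 1+|x|^2$ yields the claim. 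The main technical obstacle is producing the uniform-in-$R$ bound on the weighted derivatives of $\psi_R$ together with the clean estimate of the negative-part contribution; both reduce to elementary pointwise inequalities, after which the argument is a routine weighted-energy computation.
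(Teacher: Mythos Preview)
Your proposal is correct and follows essentially the same approach as the paper: test against $(1+|x|^2)\varphi_R$ with the radial cutoff from Lemma~\ref{mass decay lemma}, use the decomposition \eqref{alternative_form} together with the pointwise bound on the negative part, obtain a Gronwall inequality via $|\nabla\psi_R|,|\Delta\psi_R|\le C\psi_R$, and send $R\to\infty$. The only cosmetic differences are that the paper states the derivative bounds for $\varphi$ rather than $\psi_R$ and invokes dominated rather than monotone convergence; your formulation is in fact slightly cleaner on both points.
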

\begin{prop}[ Estimate for $\|\nabla\sqrt{\rhoe}\|_{L^2((0,T)\times\Rd)}$]\label{grad.log.estimate} Let $\rhoe$ be a weak solution to (\ref{smooth.eq}) for any $\varepsilon \ge 0$, then
\begin{align*}
	(\int_{\Rd }\rhoe\log\rhoe\dx)(T)+\int_0^T\int_{\Rd}\frac{\lvert\nabla\rhoe\rvert^2}{\rhoe}\dx\dt\leq \bar{C}[\rho_{in}],%
\end{align*}
where $\bar{C}$ depends on the moment, entropy and $L^2$ norm of the initial data.
\end{prop}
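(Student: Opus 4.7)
The plan is to test the viscous weak formulation (\ref{smooth.eq}) with $\log(\rho_\eps+\delta)$, where $\delta>0$ is a regularization parameter, multiplied by a smooth spatial cutoff $\eta_R$ supported in $B_R$ to ensure admissibility. Using the identity $\partial_\rho[(\rho+\delta)\log(\rho+\delta)-(\rho+\delta)]=\log(\rho+\delta)$, the time-derivative term produces the entropy evaluated at $T$ minus its initial value. The identity $\nabla\rho_\eps\cdot\nabla\log(\rho_\eps+\delta)=|\nabla\rho_\eps|^2/(\rho_\eps+\delta)$ converts both the $\varepsilon$-viscous diffusion and the nonlinear diffusion into non-negative Fisher-type integrands, up to a lower-order viscous error $\varepsilon\int\rho_\eps\log(\rho_\eps+\delta)$.

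The crucial step is to extract the \emph{full} Fisher information $\int|\nabla\rho_\eps|^2/\rho_\eps$ from the truncated version produced by the nonlinear term. I would split the integration region into $\{|\nabla\rho_\eps|\ge 2\chi\rho_\eps\}$ and its complement. On the former, $(1-\chi\rho_\eps/|\nabla\rho_\eps|)_+\ge 1/2$, directly controlling half the Fisher integrand. On the latter, the pointwise bound $|\nabla\rho_\eps|^2/\rho_\eps\le 4\chi^2\rho_\eps$ holds, and $\int\rho_\eps$ is uniformly bounded by Lemma \ref{mass decay lemma}. Combining these, and passing $\delta\to 0$ by monotone convergence, yields
\begin{equation*}
\int_0^T\IRd\frac{|\nabla\rho_\eps|^2}{\rho_\eps}\,dx\,dt \le 2\int_0^T\IRd\left(1-\frac{\chi\rho_\eps}{|\nabla\rho_\eps|}\right)_+\frac{|\nabla\rho_\eps|^2}{\rho_\eps}\,dx\,dt+4\chi^2 T\|\rho_{{\rm in}}\|_{L^1}.
\end{equation*}
Inserting this into the entropy identity gives the claimed bound, provided I control the viscous error $-\varepsilon\int_0^T\int\rho_\eps\log\rho_\eps$ and the negative part of $\int\rho_\eps\log\rho_\eps$ at time $T$. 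For these I would invoke the standard decomposition of $(\rho\log\rho)_-$: splitting according to $\rho\le e^{-|x|^2}$ versus $\rho>e^{-|x|^2}$ bounds this quantity pointwise by $\rho|x|^2$ plus an integrable Gaussian-type term, so that $\int(\rho_\eps\log\rho_\eps)_-$ is controlled by the second moment, uniformly bounded by Proposition \ref{moment lemma}.

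The main technical obstacle is the rigorous justification of the test function: $\log(\rho_\eps+\delta)$ is not \emph{a priori} in $L^2(0,T;H^1(\Rd))$ as required by the weak formulation, since it tends to $\log\delta\ne 0$ at spatial infinity. Multiplying the shifted quantity $\log(\rho_\eps+\delta)-\log\delta$ by the cutoff $\eta_R$ produces an admissible test function; the cutoff error as $R\to\infty$ is controlled by the $L^p$ bound of Proposition \ref{Proposition LP} and the moment bound of Proposition \ref{moment lemma}, after which $\delta\to 0$ is taken using monotone convergence on the Fisher terms and dominated convergence on the remaining terms.
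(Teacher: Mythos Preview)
Your proposal is correct and follows essentially the same route as the paper: test \eqref{smooth.eq} with the shifted logarithm $\log(\rho_\eps+\delta)-\log\delta$, extract the full Fisher information from the truncated dissipation at the cost of a term controlled by the mass (Lemma~\ref{mass decay lemma}), and bound the negative part of the entropy via the second moment (Proposition~\ref{moment lemma}). The only cosmetic difference is in the extraction step: the paper writes $\left(1-\tfrac{\chi\rho_\eps}{|\nabla\rho_\eps|}\right)_+ = \left(1-\tfrac{\chi\rho_\eps}{|\nabla\rho_\eps|}\right) + \left(1-\tfrac{\chi\rho_\eps}{|\nabla\rho_\eps|}\right)_-$ and applies Young's inequality to the cross term $\chi\rho_\eps|\nabla\rho_\eps|/(\rho_\eps+\delta)$, whereas you split pointwise along the level set $\{|\nabla\rho_\eps|=2\chi\rho_\eps\}$; both yield the same inequality with slightly different constants. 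Your inclusion of the spatial cutoff $\eta_R$ is an extra layer of care that the paper omits (the test function $\log(\rho_\eps+\delta)-\log\delta$ is already in $L^2(0,T;H^1(\R^d))$ once $\nabla\rho_\eps\in L^2$ from Proposition~\ref{Proposition LP}), but it does no harm.
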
 
\begin{proof}
	Let us test \eqref{smooth.eq} with the test function $\varphi=\log(\rhoe+\sigma)-\log{\sigma}$, with $0<\sigma<<1$. After some computations we get
	\begin{align*}
		&\left(\IRd\left[(\rhoe+\sigma)\log(\rhoe+\sigma)-\rhoe\log\sigma\right]\dx\right)(T)+\eps\int_0^T\hspace{-2mm}\int_{\Rd}\rhoe\left[\log(\rhoe+\sigma)-\log{\sigma}\right]\dx\dt\\
		&\qquad\leq\IRd\left[(\rho_{in}+\sigma)\log(\rho_{in}+\sigma)-\rho_{in}\log\sigma\right]\dx,
	\end{align*} 
	which implies
	\begin{align*}
	\left(\IRd(\rhoe+\sigma)\log(\rhoe+\sigma)\dx\right)(T)\leq&\IRd  (\rho_{in}+\sigma)\log(\rho_{in}+\sigma)\dx \\
    &-\eps\int_0^T\hspace{-2mm}\int_{\Rd}\rhoe\log{(\rhoe+\sigma)}\dx\dt\\
	&+\log\sigma\left(\IRd\rhoe\dx-\IRd\rho_{in}\dx+\eps\int_0^T\hspace{-2mm}\int_{\Rd}\rhoe\dx\dt\right).
	\end{align*} 
	The last term in the previous inequality is zero by \eqref{mass decay comp}. Taking $\sigma\rightarrow0$, by monotone and dominated convergence, we get
	\begin{align*}
		\left(\IRd\rhoe\log\rhoe\dx\right)(T)\leq&\IRd  \rho_{in}\log\rho_{in}\dx-\eps\int_0^T\hspace{-2mm}\int_{\Rd}\rhoe\log{\rhoe}\dx\dt.
	\end{align*} 
	In particular, 
	\begin{align*}
		\left(\IRd\rhoe|\log\rhoe|\dx\right)(T)\leq e^{\eps T}C[\rho_{in}],
	\end{align*}
	where $C[\rho_{in}]$ is a constant that depends on $\IRd\rho_{in}|\log{\rho_{in}}|\dx$, {the mass} and the second moment of $\rho_{in}$.
	
	To bound $\int_0^T\IRd\frac{|\na\rhoe|^2}{\rhoe}$, we define $h[\rhoe](t):=\left(\IRd\rhoe\log(\rhoe)\dx\right)(t)$. Using the fact that 
	\begin{align*}
		\left(1-\frac{\chi\rhoe}{|\na\rhoe|}\right)_+ =\left(1-\frac{\chi\rhoe}{|\na\rhoe|}\right)+\left(1-\frac{\chi\rhoe}{|\na\rhoe|}\right)_-,
	\end{align*}
	and repeating the same computations as before, we obtain
	\begin{align*}
		h[\rhoe](T)+\frac{1}{2}\int_0^T\hspace{-2mm}\int_{\Rd}\frac{|\na\rhoe|^2}{\rhoe+\sigma}\dx\dt\leq\frac{\chi^2}{2}\int_0^T\hspace{-2mm}\int_{\Rd}\rhoe\dx\dt-\eps\int_0^T h[\rhoe](t)\dt+h[\rho_{in}],
	\end{align*}
	which implies
	\begin{align*}
		\frac{1}{2}\int_0^T\hspace{-2mm}\int_{\Rd}\frac{|\na\rhoe|^2}{\rhoe+\sigma}\dx\dt\leq\frac{\chi^2}{2}\int_0^T\hspace{-2mm}\int_{\Rd}\rhoe\dx\dt+ e^{\eps T}C[\rho_{in}].
	\end{align*}
	\normalcolor 
	\color{red}
\normalcolor
\end{proof}

\begin{lem}[$\|\nabla\rhoe\|_{L^p}$ estimate]\label{Proposition Lp gradient}  Let $\rhoe$ be a weak solution to (\ref{smooth.eq}) for any $\varepsilon \ge 0$, then
	\begin{align*}
		\|\nabla\rhoe\|_{L^p((t,T)\times \Rd)}< \frac{1}{t^{1/2}}C(\|\rho_{in}\|_{L^p}),\quad \forall  \; t>0.
	\end{align*}
\end{lem}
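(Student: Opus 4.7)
My plan is to treat equation \eqref{smooth.eq} as a linear inhomogeneous heat equation by isolating the pure diffusion $\Delta\rhoe$, and then invoke classical parabolic $L^p$-smoothing. As a preliminary step, the decomposition \eqref{alternative_form} lets me rewrite
\begin{equation*}
\left(1 - \frac{\chi \rhoe}{|\nabla \rhoe|}\right)_+ \nabla \rhoe = \nabla \rhoe + h_\eps, \qquad h_\eps := -\chi \rhoe \frac{\nabla \rhoe}{|\nabla \rhoe|} - \left(1 - \frac{\chi \rhoe}{|\nabla \rhoe|}\right)_- \nabla \rhoe,
\end{equation*}
and a pointwise case analysis (of the same type already used in the proof of Lemma \ref{mass decay lemma}) gives $|h_\eps| \leq 2\chi \rhoe$. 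Equation \eqref{smooth.eq} then becomes the linear inhomogeneous heat equation
\begin{equation*}
\partial_t \rhoe - (1+\eps)\Delta \rhoe = \Div h_\eps - \eps \rhoe,
\end{equation*}
whose forcing lies in $L^p((0,T) \times \Rd)$ uniformly in $\eps$: by Proposition \ref{Proposition LP}, $\|h_\eps\|_{L^p((0,T) \times \Rd)} \le 2\chi\, T^{1/p}\,\|\rho_{in}\|_{L^p}$.

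Next I would apply Duhamel's formula,
\begin{equation*}
\rhoe(t) = e^{(1+\eps)t\Delta}\rho_{in} + \int_0^t e^{(1+\eps)(t-s)\Delta}\bigl(\Div h_\eps(s) - \eps\rhoe(s)\bigr)\,ds,
\end{equation*}
differentiate in $x$, and estimate the three resulting contributions separately in $L^p((t,T) \times \Rd)$. The initial-data term is the only source of the $t^{-1/2}$ singularity: combining the heat-semigroup bound $\|\nabla e^{\tau \Delta} f\|_{L^p(\Rd)} \le C\tau^{-1/2} \|f\|_{L^p(\Rd)}$ with a direct integration of $\tau^{-p/2}$ over $[t,T]$ gives $\|\nabla e^{(1+\eps)\cdot \Delta}\rho_{in}\|_{L^p((t,T)\times\Rd)} \le C(T)\, t^{-1/2} \|\rho_{in}\|_{L^p}$. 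The $\Div h_\eps$ contribution is controlled by parabolic maximal $L^p$-regularity (Benedek--Calder\'on--Panzone) and is bounded by $C\,\|h_\eps\|_{L^p((0,T) \times \Rd)}$, which is uniformly controlled as $t \to 0^+$. Finally, the lower-order $\eps\rhoe$ term is handled by Young's convolution inequality, since $\tau^{-1/2} \in L^1(0,T)$ and $\rhoe \in L^\infty(0,T;L^p)$, producing an $O(\eps\,T^{1/2+1/p}\|\rho_{in}\|_{L^p})$ bound that moreover vanishes as $\eps \to 0$.

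Summing the three contributions yields $\|\nabla \rhoe\|_{L^p((t,T)\times\Rd)} \le t^{-1/2} C(\|\rho_{in}\|_{L^p})$, uniformly in $\eps$, with the constant depending also on $T$, $\chi$, $p$, and $d$. The main technical obstacle I anticipate is justifying the Duhamel representation and the parabolic Calder\'on--Zygmund estimate at the level of the weak solution $\rhoe$, which is not a priori a mild solution. I would sidestep this by first establishing the bound for the smooth approximations used to construct $\rhoe$ in Section \ref{Proof section}, where the regularized equation is strong enough to invoke semigroup methods directly, and then passing to the limit using the uniform bound.
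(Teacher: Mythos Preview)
Your proposal is correct and follows essentially the same route as the paper: isolate the pure diffusion via the identity \eqref{alternative_form}, observe that the remainder $h_\eps$ (the paper's $f$) is pointwise bounded by a multiple of $\rhoe$ and hence lies in $L^\infty(0,T;L^p)$, and then split $\rhoe$ into a homogeneous heat-semigroup piece carrying the $t^{-1/2}$ singularity plus a Duhamel piece controlled by parabolic maximal regularity. The only cosmetic difference is that the paper packages the Duhamel contribution by introducing auxiliary solutions $u_i$ of $\pa_t u_i -(1+\eps)\Delta u_i+\eps u_i = f_i$ with zero initial data and writing $\rhoe=\Div u+g$, thereby folding the $\eps\rhoe$ term into the linear operator rather than treating it as an extra forcing; either way the estimates are the same, and your remark about first working at the approximate level is an appropriate way to make the argument rigorous.
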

\begin{proof}
	Define $f$ as 
	\begin{align*}
	f := \left(1-\frac{\chi\rhoe}{\lvert\nabla\rhoe\rvert}\right)_+ \nabla \rhoe - \nabla \rhoe.
	\end{align*} 
	Note that each component of $f$ belongs to $L^\infty(0,T,L^q(\mathbb{R}^d))$ for any 
 $q\in [1,+\infty)$. In fact, in the domain where $|\nabla\rhoe| \le \chi \rhoe$, the vector function $f$ coincides with $\nabla \rhoe$, which is bounded by $\chi \rhoe$.  In the complementary domain, we have that 
	$$
	\left(1-\frac{\chi\rhoe}{\lvert\nabla\rhoe\rvert}\right)_+  = \left(1-\frac{\chi\rhoe}{\lvert\nabla\rhoe\rvert}\right),
	$$
	and $|f | \le  \chi  \rhoe$.  With this in mind, consider the function $u_i$, solution to 
	$$
	{u_i}_t - (1+\varepsilon) \Delta  u_i + \varepsilon u_i = f_i, \quad u_i(x,0) =0, \quad i = 1, ...,d.
	$$
	Parabolic regularity theory tells us that $u_i \in L^q(0,T,W^{2,q})$, with bounds uniformly in $\varepsilon$. Direct computations shows that $\rhoe$ can be written as the sum of $\textrm{div}(u) + g$, where $g$ solves 
	$$
	g_t = (1+\varepsilon) \Delta g - \varepsilon g, \quad g(x,0) = \rho_{in}.
	$$
	Since $\rho_0 \in L^p(\mathbb{R}^d)$,  we have that 
	$$
	\sup_{t>0} \|\nabla g\|_{L^p} \le\frac{c}{ t^{1/2}} \|\rho_0\|_{L^p},
	$$
	with $c$ independent on $\varepsilon$.  Summarizing, $\textrm{div}(u) \in L^p(0,T, W^{1,p})$, $g \in L^\infty (t,T, W^{1,p})$,  and the thesis follows.

\end{proof}

\begin{lem}\label{Lemma.DeGiorgi}  Let $\rhoe$ be a weak solution to (\ref{smooth.eq}) for any $\varepsilon \ge 0$, then {for every $t>0$ it holds}
\begin{equation*}
\|\rhoe\|_{L^{\infty}({t},T;\Rd)}\leq C {T^{\frac{1}{p}}\left(1 + t^{-\frac{d+2}{2p}}\right)}
\|\rho_{in}\|_{L^p(\Rd)},
\end{equation*}
where $C$ only depends on $p$, $\chi$ and other universal constants.
\end{lem}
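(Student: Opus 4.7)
The plan is a Moser iteration based on the $L^p$ estimate from Proposition~\ref{Proposition LP}, using the same decomposition already exploited in Lemma~\ref{mass decay lemma}. The structural identity
$$
\left(1-\frac{\chi\rhoe}{|\nabla\rhoe|}\right)_+ \nabla\rhoe = \nabla\rhoe - \chi\rhoe\frac{\nabla\rhoe}{|\nabla\rhoe|} + \left(1-\frac{\chi\rhoe}{|\nabla\rhoe|}\right)_- \nabla\rhoe
$$
recasts \eqref{smooth.eq} as a heat equation perturbed by a divergence-form drift $-\Div G_\eps$ with the pointwise bound $|G_\eps|\le 2\chi\rhoe$. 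This $O(\rhoe)$ size of the drift is exactly what can be absorbed into the Laplacian via Young's inequality at every step, and is the only non-classical ingredient of the argument.

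Concretely, for $q\ge p$ and a time cutoff $\eta$ vanishing on $[0,t_-]$ and equal to $1$ on $[t_+,T]$ (with $|\eta'|\le 2/(t_+-t_-)$), I would test the weak formulation with $\rhoe^{q-1}\eta^2$ (admissible since $\rhoe>0$ a.e., as inherited from the positive initial data) and apply Young's inequality to the drift term. This yields the $L^q$-energy inequality
$$
\sup_{s\in[t_+,T]}\int_{\Rd}\rhoe^q\,dx + \int_{t_+}^T\!\!\int_{\Rd}|\nabla\rhoe^{q/2}|^2\,dx\,ds \le C q^2\Bigl(1+\tfrac{1}{t_+-t_-}\Bigr)\int_{t_-}^T\!\!\int_{\Rd}\rhoe^q\,dx\,ds,
$$
with $C=C(\chi)$ independent of $q$ and $\eps$. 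Combined with the parabolic Gagliardo--Nirenberg inequality $\int\!\!\int u^{2\kappa}\le C(\sup\!\int u^2)^{\kappa-1}\int\!\!\int|\nabla u|^2$ applied to $u=\rhoe^{q/2}$, where $\kappa=(d+2)/d$, this produces the Moser recursion
$$
\int_{t_+}^T\!\!\int_{\Rd}\rhoe^{q\kappa}\,dx\,ds \le \Bigl[Cq^2\Bigl(1+\tfrac{1}{t_+-t_-}\Bigr)\Bigr]^\kappa\Bigl(\int_{t_-}^T\!\!\int_{\Rd}\rhoe^q\,dx\,ds\Bigr)^\kappa.
$$

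The iteration is then carried out along the geometric sequences $q_n:=p\kappa^n$ and $t_n:=t(1-2^{-n-1})$, so that $t_0=t/2$, $t_n\nearrow t$, and $t_{n+1}-t_n=t\cdot 2^{-n-2}$. Setting $N_n:=\bigl(\int_{t_n}^T\!\!\int_{\Rd}\rhoe^{q_n}\bigr)^{1/q_n}$, the recursion becomes $N_{n+1}\le [Cq_n^2(1+2^n/t)]^{1/q_n}N_n$. Since $\sum_{n\ge 0}1/q_n = \frac{\kappa}{p(\kappa-1)} = \frac{d+2}{2p}$ converges, an elementary bookkeeping of logarithms---whose dominant contribution for small $t$ is $(-\log t)\cdot\sum 1/q_n$---shows
$$
\prod_{n\ge 0}[Cq_n^2(1+2^n/t)]^{1/q_n}\le C\bigl(1+t^{-(d+2)/(2p)}\bigr).
$$
Using $N_0\le T^{1/p}\|\rhoin\|_{L^p(\Rd)}$ from Proposition~\ref{Proposition LP} and the fact that $N_n\to\|\rhoe\|_{L^\infty((t,T)\times\Rd)}$ by monotone convergence, the desired bound follows.

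The main obstacle is structural rather than technical: the quasilinear flux is genuinely degenerate on $\{|\nabla\rhoe|\le\chi\rhoe\}$, so there is no direct uniform ellipticity from which to launch a Moser or De~Giorgi scheme applied to the flux itself. The decomposition above is what rescues the argument by separating the full Laplacian from a divergence-form drift that, thanks to the $O(\rhoe)$ bound, is self-controlled by Young's inequality. Once this is recognized, the iteration reduces to the classical Moser scheme and the exponent $(d+2)/(2p)$ emerges naturally from the parabolic scaling $\sum_n 1/q_n=(d+2)/(2p)$.
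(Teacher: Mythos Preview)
Your argument is correct and follows a genuinely different route from the paper. The paper proves this lemma via a De~Giorgi level-set iteration: it sets truncations $\rho_{\eps,k}=(\rhoe-C_k)_+$ with $C_k=M(1-2^{-k})$ and time levels $T_k=\tau(1-2^{-k})$, tests with $\rho_{\eps,k+1}^{p-1}$, and derives a nonlinear recursion $U_{k+1}\le (1+\tau^{-1})C^k M^{-2p/(d+2)}\,U_k^{1+2/(d+2)}$ for the energies $U_k=\int_{T_k}^T\!\int\rho_{\eps,k}^p$, then invokes a fast geometric convergence lemma \cite[Lemma~7.1]{Giusti2003} to force $U_k\to 0$ once $M$ is chosen of the claimed size. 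You instead run a Moser iteration, raising the exponent along $q_n=p\kappa^n$ with $\kappa=(d+2)/d$ and shrinking times $t_n\nearrow t$. Both schemes rest on the \emph{same} structural observation---the decomposition \eqref{alternative_form} isolating the full Laplacian from an $O(\rhoe)$ divergence-form drift---and on the same parabolic Sobolev gain; the exponent $(d+2)/(2p)$ emerges from the geometric sum $\sum_n\kappa^{-n}$ in your version and from the nonlinear decay lemma in the paper's. The De~Giorgi route has the minor practical advantage that its test functions $\rho_{\eps,k+1}^{p-1}$ stay at the fixed exponent $p$, so their admissibility in $L^2(0,T;H^1)$ is immediate from Proposition~\ref{Proposition LP}, whereas your high-power test functions $\rhoe^{q-1}\eta^2$ strictly require a truncation-and-limit step to be admissible; but this is routine and your sketch is otherwise complete.
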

\begin{proof}
	Let us consider the following sequence of times 
	\begin{align*}
		T_k=\tau(1-2^{-k}),
	\end{align*}
	and the following sequence of energies
	\begin{align*}
		U_k=\int_{T_k}^{T}\int_{\Rd} \rho_{\eps, k}^p\dx\dt, \quad\mbox{where}\quad \rho_{\eps, k}=(\rhoe-C_k)_+,\quad\mbox{and}\quad C_k=M(1-2^{-k}),
	\end{align*}
	with $M>0$ to fix later. Then, we test \eqref{smooth.eq} with $\rho_{\eps, k+1}^{p-1}$ and we integrate in time in $(t_1, t_2)$, where $0\leq T_k\leq t_1\leq T_{k+1}\leq t_2\leq T$. We find that, using (\ref{alternative_form}), we get 
	\begin{align*}
	\frac{1}{p}\int_{t_1}^{t_2}\int_{\Rd} &\partial_t(\rho_{\eps, k+1}^p)\dx\dt\\ 
    &\leq -\int_{t_1}^{t_2}\int_{\Rd}\nabla \rhoe\nabla(\rho_{\eps, k+1}^{p-1})\dx\dt+\int_{t_1}^{t_2}\int_{\Rd}\frac{\chi \rho}{|\nabla \rhoe|}\nabla(\rho_{\eps, k+1}^{p-1})\na \rhoe\dx\dt.
	\end{align*}
	After some computations, from the expression above we get
	\begin{align}\label{DeGiorgi.1}
		\frac{1}{p}\int_{t_1}^{t_2}\hspace{-2mm}\int_{\Rd}\partial_t(\rho_{\eps, k+1}^p)\dx\dt+\frac{(p-1)}{p^2}\int_{t_1}^{t_2}\hspace{-2mm}\int_{\Rd}|\nabla \rho_{\eps, k+1}^{\frac{p}{2}}|^2\dx\dt\leq\frac{(p-1)}{2}\int_{t_1}^{t_2}\hspace{-2mm}\int_{\Rd}\rho_{\eps, k+1}^p\dx\dt\nonumber\\
		+2\chi^2C_{k+1}^2(p-1)\int_{t_1}^{t_2}\hspace{-2mm}\int_{\Rd} \rho_{\eps, k+1}^{p-2}\dx\dt.
	\end{align}
	Now, we notice that, in the set $\rho_{\eps, k+1}>0$, $\rho_{\eps, k}=\rho_{\eps, k+1}+C_{k+1}-C_{k}$, i.e. that $\frac{\rho_{\eps, k}}{C_{k+1}-C_k}=\frac{\rho_{\eps, k+1}}{C_{k+1}-C_k}+1>1$. Then, for any $\alpha>0$ we have $\left(\frac{\rho_{\eps, k}}{C_{k+1}-C_k}\right)^{\alpha}>1$ and, consequently $\left(\frac{1}{C_{k+1}-C_k}\right)^{\alpha}\rho_{\eps, k}^{1+\alpha}>\rho_{\eps, k+1}$. Now, let us choose $\alpha=\frac{2}{p-2}$ to get 
	\begin{align*}
		\rho_{\eps, k+1}\leq\left(\frac{1}{C_{k+1}-C_k}\right)^{\frac{2}{p-2}}\rho_{\eps, k}^{\frac{p}{p-2}}\mathds{1}_{\{\rho_{\eps, k+1}>0\}}.
	\end{align*}
	Introducing this expression in \eqref{DeGiorgi.1} we get
	\begin{align*}
		&\frac{1}{p}\left(\int_{\Rd}\rho_{\eps, k+1}^p\dx\right)(t_2)+\frac{(p-1)}{p^2}\int_{t_1}^{t_2}\hspace{-2mm}\int_{\Rd}|\nabla \rho_{\eps, k+1}^{\frac{p}{2}}|^2\dx\dt\leq \frac{(p-1)}{2}\int_{t_1}^{t_2}\hspace{-2mm}\int_{\Rd}\rho_{\eps, k+1}^p\dx\dt\\
		&+2\chi^2(p-1)\left(\frac{C_{k+1}}{C_{k+1}-C_k}\right)^2\int_{t_1}^{t_2}\hspace{-2mm}\int_{\Rd} \rho_{\eps,k}^{p}\dx\dt+\frac{1}{p}\left(\int_{\Rd}\rho_{\eps, k+1}^p\dx\right)(t_1).
	\end{align*}
	Then, averaging $t_1$ in $(T_k,T_{k+1})$ and taking the supremum in $t_2\in(T_{k+1},T)$ we have
	\begin{align*}
			E_{k+1}& :=\sup_{t\in[T_{k+1}, T]}\left(\int_{\Rd}\rho_{\eps, k+1}^p\dx\right)+\int_{T_{k+1}}^{T}\int_{\Rd}|\nabla \rho_{\eps, k+1}^{\frac{p}{2}}|^2\dx\dt\\
			&\;\;\leq C\left(1+2^k+\frac{2^k}{{\tau}}\right)\int_{T_{k}}^{T}\int_{\Rd}\rho_{\eps,k}^p\dx\dt,
	\end{align*}
where $C$ only depends on $p$ and $\chi$. From now on, this $C$ will change from line to line for convenience, always depending only on  $p$ and $\chi$.
	Now, notice that $E_{k+1}$ controls $\rho_{\eps,k+1}$ in $L^{\infty}(T_{k+1}, T;L^p(\Rd))$ and, thanks to Sobolev,
	$L^{p}(T_{k+1}, T;L^{\frac{dp}{d-2}}(\Rd))$, so, by interpolation
	\begin{align}\label{DeGiorgi-computations-1}
		\|\rho_{\eps,k+1}\|^p_{L^{\frac{p(d+2)}{d}}(\R^d\times (T_{k+1},T))}\leq CE_{k+1}\leq C'\left(1+2^k+\frac{2^k}{{\tau}}\right)\int_{T_{k}}^{T}\int_{\Rd}\rho_{\eps,k}^p\dx\dt.
	\end{align} 
	Then, Chebyshev's inequality yields:
	\begin{align*}
		U_{k+1}:&=\int_{T_{k+1}}^{T}\int_{\Rd} \rho_{\eps, k+1}^p\dx\dt\\
		&\leq\left(\int_{T_{k+1}}^{T}\int_{\Rd}\rho_{\eps, k+1}^{\frac{p(d+2)}{d}}\dx\dt\right)^{\frac{d}{d+2}}\left(\int_{T_{k+1}}^{T}\int_{\Rd}\mathds{1}_{\{\rho_{\eps, k+1}>0\}}\dx\dt\right)^{\frac{2}{d+2}}\\
		&=\left(\int_{T_{k+1}}^{T}\int_{\Rd}\rho_{\eps, k+1}^{\frac{p(d+2)}{d}}\dx\dt\right)^{\frac{d}{d+2}}\left(\int_{T_{k+1}}^{T}\int_{\Rd}\mathds{1}_{\{\rho_{\eps,k}>\frac{M}{2^{k+1}}\}}\dx\dt\right)^{\frac{2}{d+2}}\\
		&\leq CE_{k+1}\left(\frac{2^{k+1}}{M}\right)^{\frac{2p}{d+2}}\left(\int_{T_{k}}^T\int_{\R^d}\rho_{\eps, k}^p\dx\dt\right)^{\frac{2}{d+2}}\\
		&\leq \left(1{+\frac{1}{\tau}}\right)\frac{C^k}{M^{\frac{2p}{d+2}}}U_k^{\beta},
	\end{align*}
	where $\beta=1+\frac{2}{d+2}$, using \eqref{DeGiorgi-computations-1} to obtain the second and the third inequality. {From \cite[Lemma 7.1]{Giusti2003} it follows}  
 that $U_k\rightarrow0$ as $k\rightarrow \infty$, i.e. that $\rhoe\leq M$, if
	\begin{align*}
		\left(1+\frac{1}{{\tau}}\right)^{\frac{d+2}{2}}\frac{C}{M^p}U_0<1.
	\end{align*}
	So, we fix $M$ in order to 
	\begin{align*}
		C\left(1+\frac{1}{{\tau}}\right)^{\frac{d+2}{2p}} U_0^{1/p}\sim M.
	\end{align*}
Finally, we conclude {from Proposition \ref{Proposition LP}}
\begin{align*}
	\|\rhoe\|_{L^{\infty}(0,T;\Rd)}\leq M\sim C\left(1+\frac{1}{{\tau}}\right)^{\frac{d+2}{2p}} \|\rhoe\|_{L^p(0,T;\Rd)}\leq C {T^{\frac{1}{p}}}\left(1+\frac{1}{{\tau}^{\frac{d+2}{2p}}}\right) \|\rho_{in}\|_{L^p(\Rd)}.
\end{align*}
\end{proof}

We conclude this section by recalling a compactness criteria for functions defined in unbounded domains:
\begin{lem}[Compactness Lemma]\label{compactness.lemma}
	 Let $\{f_n\}$ be a bounded sequence that belongs to $L^2(0,T;H^1(\Rd))\cap 
  L^2(0,T;\Rd,\sqrt{1+|x|^2}\dx)$ and such that 
  $\pa_t f_{n}$ is bounded in $L^2(0,T;H^{-1}(\Rd))$. We have
	\begin{align*}
		f_n\rightarrow f\qquad\mbox{strongly in }\;L^2(0,T;\Rd), \qquad\mbox{as }n\rightarrow\infty.
	\end{align*}

\end{lem}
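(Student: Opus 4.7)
The plan is to reduce to the classical Aubin--Lions compactness on bounded balls and control the behaviour at infinity via the weighted $L^2$ bound. Strong convergence in $L^2((0,T)\times\mathbb{R}^d)$ will then be obtained by a standard truncation plus diagonal-extraction argument.

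First, fix $R>0$ and let $\chi_R$ be a smooth cutoff with $\chi_R\equiv 1$ on $B_R$, $\chi_R\equiv 0$ outside $B_{R+1}$, and $|\nabla\chi_R|\le C$. I would show that $\chi_R f_n$ is bounded in $L^2(0,T;H^1_0(B_{R+1}))$ (this follows from the $L^2(0,T;H^1(\mathbb{R}^d))$ bound on $f_n$) and that $\partial_t(\chi_R f_n)$ is bounded in $L^2(0,T;H^{-1}(B_{R+1}))$ (which follows from the $L^2(0,T;H^{-1}(\mathbb{R}^d))$ bound on $\partial_t f_n$, since smooth compactly supported test functions on $B_{R+1}$ extend by zero to $H^1(\mathbb{R}^d)$). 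The Rellich--Kondrachov embedding $H^1_0(B_{R+1})\hookrightarrow\hookrightarrow L^2(B_{R+1})\hookrightarrow H^{-1}(B_{R+1})$ and the Aubin--Lions lemma then yield, up to subsequence, strong convergence of $\chi_R f_n$ in $L^2(0,T;L^2(B_{R+1}))$; in particular $f_n\to f$ strongly in $L^2((0,T)\times B_R)$.

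Second, I would exploit the weighted integrability. Using the elementary bound $1\le \sqrt{1+|x|^2}/\sqrt{1+R^2}$ on $\{|x|>R\}$, I get the uniform tail estimate
\begin{equation*}
\int_0^T\!\int_{|x|>R}|f_n|^2\dx\dt \;\le\; \frac{1}{\sqrt{1+R^2}}\int_0^T\!\int_{\mathbb{R}^d}|f_n|^2\sqrt{1+|x|^2}\dx\dt \;\le\; \frac{C}{R},
\end{equation*}
with $C$ independent of $n$. The weak limit $f$ inherits the same weighted $L^2$ bound by lower semicontinuity, so the tail of $f$ is controlled in the same way.

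Third, I combine the two ingredients through a diagonal procedure: applying Step 1 along the sequence $R=1,2,3,\dots$ and extracting diagonally, I obtain a single subsequence $\{f_{n_k}\}$ converging to $f$ strongly in $L^2((0,T)\times B_R)$ for every $R$. Splitting
\begin{equation*}
\|f_{n_k}-f\|_{L^2((0,T)\times\mathbb{R}^d)}^2 \le \|f_{n_k}-f\|_{L^2((0,T)\times B_R)}^2 + 2\int_0^T\!\int_{|x|>R}|f_{n_k}|^2\dx\dt + 2\int_0^T\!\int_{|x|>R}|f|^2\dx\dt,
\end{equation*}
the last two terms are made smaller than any $\eps$ by choosing $R$ large (using the uniform tail estimate), and the first term then vanishes as $k\to\infty$, proving strong convergence of the whole sequence (once the limit is identified, the full sequence converges by the standard subsequence principle since the limit is unique).

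The only delicate point I anticipate is the restriction of the $H^{-1}(\mathbb{R}^d)$-bound on $\partial_t f_n$ to $H^{-1}(B_{R+1})$; this is handled cleanly by the cutoff $\chi_R$, which turns a global bound into a local one at the price of an irrelevant term $f_n\,\partial_t\chi_R=0$. Everything else is routine application of Aubin--Lions plus the tail estimate granted by the weighted bound.
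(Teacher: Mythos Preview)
Your proposal is correct and follows essentially the same route as the paper: localize via Aubin--Lions on balls, control the tails through the weighted $L^2$ bound, and patch things together by a diagonal extraction. The only cosmetic difference is that the paper restricts $f_n$ directly to $B_R$ (using $H^1(B_R)\hookrightarrow\hookrightarrow L^q(B_R)$ and the fact that test functions in $H^1_0(B_R)$ extend by zero to $H^1(\R^d)$) rather than multiplying by a cutoff $\chi_R$, and it obtains the weighted bound on the limit $f$ via a.e.~convergence plus Fatou instead of weak lower semicontinuity; both variants are equivalent in effect.
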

\begin{proof}
	As $\{f_n\}$ is uniformly bounded in $L^2(0,T;H^1(\Rd))$ we have that
\begin{align*}
	f_n\rightharpoonup f\quad\mbox{weakly in }L^2(0,T;L^{\frac{2d}{d-2}}(\Rd)).
\end{align*}
Then, for any $R>0$, as $\pa_tf_{n}\in L^2(0,T;H^{-1}({B_R}))$ and $H^1(B_R)\subset\subset L^{q}(B_R)$ with $2\leq q<{\frac{2d}{d-2}}$, by Aubin-Lions Theorem (Theorem 5 in \cite{Simon86}), we have that a subsequence, $\{f_n^R\}$, converges strongly to $f$ as follows
\begin{align*}
	f_n^R\rightarrow f\quad\mbox{strongly in }L^2(0,T;L^{q}(B_R)).
\end{align*}
By a Cantor diagonal argument, we have a new subsequence $f_n$, independent of $R$, such that
\begin{align*}
	f_n\rightarrow f\quad\mbox{strongly in }L^2(0,T;L^{q}(B_R)),
\end{align*}
and consequently,
\begin{align*}
	&f_n\rightarrow f\quad\mbox{a.e. in }[0,T]\times \Rd.
\end{align*}
By Fatou's Lemma, we also have
\begin{align*}
	\int_0^T\IRd f^2\sqrt{1+|x|^2}\dx\dt\leq \liminf\limits_{\eps\rightarrow0}\int_0^T\IRd f_n^2\sqrt{1+|x|^2}\dx\dt<\infty.
\end{align*}
Now, setting $q=2$, for any $\delta>0$, there exist sufficiently large $n$ and $R$ such that
\begin{align*}
	&\int_0^T\IRd\lvert f_n-f\rvert^2\dx\dt=\int_0^T\int_{B_R}\lvert f_n-f\rvert^2\dx\dt+\int_0^T\int_{\Rd\setminus B_R}\lvert f_n-f\rvert^2\dx\dt\\
	&\leq\frac{\delta}{2}+\frac{1}{\sqrt{1+R^2}}\int_0^T\int_{\Rd\setminus B_R}\lvert f_n-f\rvert^2 \sqrt{1+|x|^2}\dx\dt\leq\delta,
\end{align*}
and the thesis follows.
\end{proof}

\section{Existence for the viscous problem} \label{Proof section}
This section is devoted to the proof of existence of our equation equipped with extra regularizing terms $-\varepsilon \Delta \rho + \varepsilon \rho$. More precisely, we will show existence of weak solution to 
\begin{align}\label{smooth.eq_bis}
\int_0^T \int_{\Rd} \partial_t \rho_\eps \varphi dxdt = & - \int_0^T \int_{\Rd}\left( 1 - \frac{\chi \rhoe}{|\nabla \rhoe|}\right)_+ \nabla \rhoe \nabla \varphi \;dxdt \\ 
& - \varepsilon \int_0^T \int_{\Rd} \nabla \rhoe \nabla \varphi \;dxdt  - \varepsilon \int_0^T \int_{\Rd}  \rhoe  \varphi \;dxdt, \nonumber
\end{align}
 with initial data $\rho_\eps(\cdot , 0) = \rho_{in,\eps}(\cdot)$, where $\rho_{in,\eps}(\cdot)$ is an approximation of $\rho_{in}$ as smooth as one wishes. 
 
 The construction of solutions starts with a {\em{semi-linearization}} of \eqref{smooth.eq_bis}. We replace $\left( 1 - \chi  \frac{\rhoe}{|\nabla \rhoe|}\right)_+$ with $\left( 1 - \chi \frac{z_+}{|\nabla \rhoe|}\right)_+$ so that the diffusion term 
 $$
 \left( 1 - \chi \frac{z_+}{|\nabla \rhoe|}\right)_+ \nabla \rhoe
 $$
 is monotone with respect to $\nabla \rhoe$. Solution of this semilinearized  problem follows from the theory of monotone operators (Section \ref{sec3.1}). 
Then, the Leray-Schauder fixed point theorem will yield existence of a solution to \eqref{smooth.eq_bis} (Section \ref{sec3.2}).

\subsection{The semi-linearized equation}\label{sec3.1}
For given $\eps$, $\tilde \sigma\in[0, 1]$ and $z\in L^2(0, T; L^2(\Rd))$, let us consider
	\begin{align}\label{1.ugly.reg}
		\int_0^T\int_{\Rd}\partial_t\rhoe\,\varphi\dx\dt=&-\tilde \sigma\int_0^T\int_{\Rd}\left(1-\frac{\chi z_+}{\lvert\nabla\rhoe\rvert}\right)_+\nabla\rhoe\nabla\varphi\dx\dt\nonumber\\
		&-\eps\int_0^T\int_{\Rd}\nabla\rhoe\nabla\varphi\dx\dt-\eps\int_0^T\int_{\Rd}\rhoe\varphi\dx\dt,%
	\end{align}
	\normalsize
	$\forall\varphi\in  L^2(0, T; H^1(\Rd))$, where $\rhoe(0, \cdot)=\rho_{{\rm in},\eps}$ and $\rho_{{\rm in},\eps}$ is a $W^{1,p}\cap L^{\infty}(\Rd)$ approximation of $\rho_{{\rm in}}$ such that $\rho_{{\rm in},\eps}\rightarrow \rho_{{\rm in}}$ strongly in $L^p(\Rd)$ when $\eps\rightarrow0$. 
	
	We show the existence of solutions to (\ref{1.ugly.reg}) via the theory of monotone operators, concretely, \cite[Theorem 30.A]{ZeidlerNonlinearIIB}. We define the following operator $A$ as 
	\begin{align*}
		\langle A[u](t), \varphi\rangle \; := \; &\tilde \sigma\int_{\Rd}\left(1-\frac{\chi z_+}{\lvert\nabla u(t, x)\rvert}\right)_+\nabla u(t, x)\nabla\varphi\dx\\
		&+\eps\int_{\Rd}\nabla u(t,x)\nabla\varphi\dx+\eps\int_{\Rd} u(t, x)\varphi \dx,
	\end{align*}
	which must satisfy the following properties:
	
	\emph{1. Monotonicity. }. The operator $A$ is monotone, since $\forall \; u,v\in H^1(\Rd)$ we have
	\begin{align*}
		\langle A[u]-A[v],u-v \rangle=&\tilde \sigma\int_{\Rd}\left[\left(1-\frac{\chi z_+}{\lvert\nabla u\rvert}\right)_+\nabla u-\left(1-\frac{\chi z_+}{\lvert\nabla v\rvert}\right)_+\nabla v\right]\nabla(u-v)\dx\\
		&+\eps\|u-v\|^2_{H^1(\Rd)}\geq0.
	\end{align*}
	
	\emph{2. Hemicontinuity. } Let us check that the map
	\begin{align*}
		t\mapsto\langle A[u+t v],\varphi \rangle
	\end{align*}
	is continuous in $t\in[0,1]$ for all $u, v, \varphi \in H^1(\Rd)$. Notice that $$\left\lvert\left(1-\frac{\chi z_+}{\lvert\nabla(u+t v)\rvert}\right)_+\nabla(u+t v)\nabla\varphi\right\rvert\leq\lvert\nabla u\rvert\lvert\nabla\varphi\rvert+\lvert\nabla v\rvert\lvert\nabla\varphi\rvert$$ so
	\small
	\begin{align*}
		\lim_{t\rightarrow t_0}\langle A[u+t v], \varphi\rangle=&\tilde \sigma\int_{\Rd}\left(1-\frac{\chi z_+}{\lvert\nabla(u+t_0 v)\rvert}\right)_+\nabla(u+t_0v)\nabla\varphi\dx+\eps\int_{\Rd}\nabla(u+t_0v)\nabla\varphi\dx\\
		&+\eps\int_{\Rd}(u+t_0v)\varphi\dx=\langle A(u+t_0 v), \varphi\rangle,
	\end{align*}
	\normalsize
	by dominated convergence. 
	
	\emph{3. Coerciveness. } We just compute
	\begin{align*}
		\langle A[u], u\rangle=\tilde \sigma\int_{\Rd}\left(1-\frac{\chi z_+}{\lvert\nabla u\rvert}\right)_+\lvert\nabla u\rvert^2\dx+\eps\|u\|^2_{H^1(\Rd)}\geq \eps\|u\|_{H(\Rd)}^2,
	\end{align*}
	so $A$ is coercive.
	
	\emph{4. Growth condition. }For this we have to prove that $\|A[u]\|_{H^*(\Rd)}\leq C\|u\|_{H(\Rd)}$, but, by Hölder,
	\begin{align*}
		\|A[u]\|_{H^*(\Rd)}=&\sup_{\substack{\varphi\in H^1(\Rd)\\ \|\varphi\|\leq 1}}\langle A[u], \varphi\rangle\\
		=&\sup_{\substack{\varphi\in H^1(\Rd)\\ \|\varphi\|\leq 1}}\int_{\Rd}\left[\tilde \sigma\left(1-\frac{\chi z_+}{\lvert\nabla u\rvert}\right)_+\nabla u\nabla\varphi+\eps\nabla u\nabla\varphi+\eps u\varphi\right]\dx\\
		\leq& \;C \|u\|_{H(\Rd)}.
	\end{align*}
	
	\emph{5. Measurability. } By assumption.

	Then, by \cite[Theorem 30.A]{ZeidlerNonlinearIIB}, there exists a unique weak solution $\rhoe\in L^2(0,T;H^1(\Rd))$ to \eqref{1.ugly.reg} {with $\pa_t\rho_\eps \in L^2(0,T; H^{-1}(\R^d))$ satisfying the initial condition in the sense of the strong $L^2(\R^d)$ convergence:
 \begin{equation}\label{in.c.ugly}
 \rhoe(t)\to \rho_{in,\eps}\quad\mbox{strongly in $L^2(\R^d)$ as $t\to 0$.}
 \end{equation}
 Therefore} we can define the mapping 
\begin{align*}
\mathcal{T}:(z, \tilde \sigma)\in L^2((0,T)\times\Rd)\times[0,1]\mapsto \rhoe\in L^2((0,T)\times\Rd),
\end{align*}
where $\rhoe$ is the unique solution to \eqref{1.ugly.reg}. 
	
\subsection{The nonlinear problem} \label{sec3.2}

In this subsection we show that our map $\mathcal{T}$ satisfies the assumptions of the Leray-Schauder fixed point theorem. Namely we have to show that $\mathcal{T}$ is (i) continuous, (ii) compact, and (iii) constant when $\tilde \sigma=0$.

 First, using $\rhoe$ as test function in \eqref{1.ugly.reg} we get
	\begin{align*}
		\frac{1}{2}\int_0^T\int_{\Rd}\partial_t(\rhoe^2)\dx\dt=&-\tilde \sigma\int_0^T\int_{\Rd}\left(1-\frac{\chi (z_++\eps)}{\lvert\nabla\rho(t, x)\rvert}\right)_+|\nabla\rhoe|^2\dx\dt\\
		&-\eps\int_0^T\int_{\Rd}|\nabla\rhoe|^2\dx\dt-\eps\int_0^T\int_{\Rd}\rhoe^2\dx\dt,
	\end{align*}
	which implies: 
	\begin{align*}
		\frac{1}{2}\|\rhoe\|^2_{L^{\infty}(0,T; L^2(\Rd))}+\eps\|\rhoe\|^2_{L^2(0,T; H^1(\Rd))}\leq \int_{\Rd}\rho_{{\rm in},\eps}^2 \; dx.
	\end{align*}
	Thus, the $L^2(0,T;H^1(\Rd))$ norm of any solution $\rhoe$ has a uniform bound. Now, we test \eqref{1.ugly.reg} with $\varphi=\rhoe\sqrt{1+|x|^2}\phi_R$, where $\phi_{R}\in C^{\infty}_c(B_{2R})$ is such that $\phi_R\equiv1$ in $B_R$ and $|\nabla\phi_R|\leq \frac{C}{R}$ for some $C>0$. Doing this we get
	\begin{align*}
		\frac{1}{2}\int_{\Rd}\rhoe^2\sqrt{1+\lvert x\rvert^2}\phi_R\dx\leq&\;\;	\frac{1}{2}\int_{\Rd}\rho_{{\rm in},\eps}^2\sqrt{1+\lvert x\rvert^2}\dx+(\tilde \sigma+\eps)\int_0^T\int_{B_{2R}}\frac{\rhoe|\nabla\rhoe|\cdot |x|}{\sqrt{1+\lvert x\rvert^2}}\dx\dt\\
		&+(\tilde \sigma+\eps)\int_0^T\int_{B_{2R}}|\na\rhoe|\rhoe\sqrt{1+|x|^2}|\na\phi_R|\dx\dt.
	\end{align*}
	Then, using $\frac{\lvert x\rvert}{\sqrt{1+\lvert x\rvert^2}}\leq1$, Young's inequality and that $|\nabla\phi_R|\leq \frac{C}{R}$, we get
	\begin{align*}
		\int_0^T\int_{\Rd}\rhoe^2\sqrt{1+\lvert x\rvert^2}\phi_R\dx\dt\leq&	\int_0^T\int_{\Rd}\rho_{{\rm in},\eps}^2\sqrt{1+\lvert x\rvert^2}\dx\dt+c\|\rhoe\|^2_{L^2(0,T;H^1(\Rd))}\\
		& + \frac{C\sqrt{1+4|R|^2}}{R}\int_0^T\int_{\Rd} \rhoe|\na\rhoe|\dx\dt\\
		\leq& \; C(\rho_{{\rm in},\eps}).
	\end{align*}
	Taking the limit $R\rightarrow\infty$ we get that solutions to \eqref{1.ugly.reg} are bounded in $ L^2(0,T;H^1(\Rd))\cap L^2(0,T;\Rd, \sqrt{1+|x|^2}\dx)$. Also, for any $\varphi\in L^2(0,T,H^1(\Rd))$ we have
	\begin{align}\label{time_derivative_bound}
		\left\lvert\int_0^T\int_{\Rd} \partial_t\rhoe(t)\varphi\dx\dt\right\rvert\leq C\|\rhoe(t)\|_{H^1(\Rd)}\|\varphi\|_{L^2(0,T,H^1(\Rd))}\leq C\|\varphi\|_{L^2(0,T,H^1(\Rd))}.
	\end{align}
	By Lemma \ref{compactness.lemma}, $\mathcal{T}$ maps bounded sets to precompact sets.
	
	Now, we have to prove that the map is continuous, i.e. that $\lim\limits_{n\rightarrow\infty}\mathcal{T}(\tilde \sigma_n,z_n)=\mathcal{T}(\tilde \sigma, z)$ when $z_n\rightarrow z$ strongly in $L^2(0,T,L^2(\Rd))$ and $\tilde \sigma_n\rightarrow \tilde \sigma$ in $[0,1]$. Is easy to see that $\lim\limits_{n\rightarrow\infty}\mathcal{T}(\tilde \sigma_n, z_n)=\lim\limits_{n\rightarrow\infty}\mathcal{T}(\tilde \sigma, z_n)$. Then, as the map is compact, it only remains to prove
	\begin{align*}
		\lim_{n\rightarrow\infty} \int_0^T\int_{\Rd}\left(1-\frac{\chi (z_n)_+}{\lvert\nabla\rho_{\eps,n}\rvert}\right)_+\nabla\rho_{\eps,n}\nabla\varphi\dx\dt=\int_0^T\int_{\Rd}\left(1-\frac{\chi z_+}{\lvert\nabla\rhoe\rvert}\right)_+\nabla\rhoe\nabla\varphi\dx\dt,
	\end{align*}
	but as $\left(1-\frac{\chi (z_n)_+}{\lvert\nabla\rho_{\eps,n}\rvert}\right)_+\nabla\rho_{\eps,n}$ is uniformly bounded in $L^2(0,T,L^2(\Rd))$, it has a weakly convergent subsequence and we only need $\nabla\rho_{\eps,n}\rightarrow\nabla\rhoe$ a.e. $x\in\Rd$. To show this convergence, we prove that $\{\na\rho_{\eps,k}\}$ is a Cauchy sequence in $L^2(0,T;L^2)$:	let $\rho_{\eps,n}$ and $\rho_{\eps,k}$ solutions to \eqref{1.ugly.reg} with $z_n$ and $z_k$ respectively. We subtract the equation for $\rho_{\eps,k}$ from the one of $\rho_{\eps,n}$ and get, after some manipulations:
	{ \begin{align*}
			\int_0^T\hspace{-2mm}\int_{\Rd}\partial_t &(\rho_{\eps,n}-\rho_{\eps,k})\varphi\dx\dt\\
    =& -\tilde \sigma\int_0^T\hspace{-2mm}\int_{\Rd}\left[\left(1-\frac{\chi (z_n)_+}{\lvert\nabla\rho_{\eps,n}\rvert}\right)_+\na\rho_{\eps,n}-\left(1-\frac{\chi (z_k)_+}{\lvert\nabla\rho_{\eps,k}\rvert}\right)_+\na\rho_{\eps,k}\right]\na\varphi\dx\dt\\
			&-\tilde \sigma\int_0^T\hspace{-2mm}\int_{\Rd}\left[\left(1-\frac{\chi (z_n)_+}{\lvert\nabla\rho_{\eps,k}\rvert}\right)_+\na\rho_{\eps,k}-\left(1-\frac{\chi (z_n)_+}{\lvert\nabla\rho_{\eps,k}\rvert}\right)_+\na\rho_{\eps,k}\right]\na\varphi\dx\dt\\
			&-\eps\int_0^T\hspace{-2mm}\int_{\Rd}\left(\na\rho_{\eps,n}-\na\rho_{\eps,k}\right)\na\varphi\dx\dt-\eps\int_0^T\hspace{-2mm}\int_{\Rd}\left(\rho_{\eps,n}-\rho_{\eps,k}\right)\varphi\dx\dt\\
			=&-\tilde \sigma\int_0^T\hspace{-2mm}\int_{\Rd}\left[\left(1-\frac{\chi (z_n)_+}{\lvert\nabla\rho_{\eps,n}\rvert}\right)_+\na\rho_{\eps,n}-\left(1-\frac{\chi (z_n)_+}{\lvert\nabla\rho_{\eps,k}\rvert}\right)_+\na\rho_{\eps,k}\right]\na\varphi\dx\dt\\
			&-\tilde \sigma\int_0^T\hspace{-2mm}\int_{\Rd}\left[\left(1-\frac{\chi (z_n)_+}{\lvert\nabla\rho_{\eps,k}\rvert}\right)_+-\left(1-\frac{\chi (z_k)_+}{\lvert\nabla\rho_{\eps,k}\rvert}\right)_+\right]\na\rho_{\eps,k}\na\varphi\dx\dt\\
			& - \eps\int_0^T\hspace{-2mm}\int_{\Rd}\left(\na\rho_{\eps,n}-\na\rho_{\eps,k}\right)_+\na\varphi\dx\dt-\eps\int_0^T\hspace{-2mm}\int_{\Rd}\left(\rho_{\eps,n}-\rho_{\eps,k}\right)\varphi\dx\dt.
	\end{align*}}%
	We use the test function $\varphi=\rho_{\eps,n}-\rho_{\eps,k}$ and get
	 \begin{align}\label{continuity-computation}
			&\left( \int_{\Rd}  \rvert\rho_{\eps,n}-\rho_{\eps,k}\lvert^2\dx \right)(t)\nonumber\\
            \leq & -\tilde \sigma\int_0^T\hspace{-2mm}\int_{\Rd}\left[\left(1-\frac{\chi (z_n)_+}{\lvert\nabla\rho_{\eps,n}\rvert}\right)_+\na\rho_{\eps,n}-\left(1-\frac{\chi (z_n)_+}{\lvert\nabla\rho_{\eps,k}\rvert}\right)_+\na\rho_{\eps,k}\right]\na(\rho_{\eps,n}-\rho_{\eps,k})\dx\dt\nonumber \\
			&-\tilde \sigma\int_0^T\hspace{-2mm}\int_{\Rd}\left[\left(1-\frac{\chi (z_n)_+}{\lvert\nabla\rho_{\eps,k}\rvert}\right)_+\hspace{-2mm}-\left(1-\frac{\chi (z_k)_+}{\lvert\nabla\rho_{\eps,k}\rvert}\right)_+\right]\na\rho_{\eps,k}\cdot\na(\rho_{\eps,n}-\rho_{\eps,k})\dx\dt\nonumber\\
			 &- \eps\int_0^T\hspace{-2mm}\int_{\Rd}\lvert\na(\rho_{\eps,n}-\rho_{\eps,k})\rvert^2\dx\dt\nonumber\\
			\leq&-\tilde \sigma\int_0^T\hspace{-2mm}\int_{\Rd}\left[\left(1-\frac{\chi (z_n)_+}{\lvert\nabla\rho_{\eps,n}\rvert}\right)_+\na\rho_{\eps,n}-\left(1-\frac{\chi (z_n)_+}{\lvert\nabla\rho_{\eps,k}\rvert}\right)_+\na\rho_{\eps,k}\right]\na(\rho_{\eps,n}-\rho_{\eps,k})\dx\dt\nonumber\\
			&+\frac{\tilde \sigma^2}{2\eps}\int_0^T\hspace{-2mm}\int_{\Rd}\left[\left(1-\frac{\chi (z_n)_+}{\lvert\nabla\rho_{\eps,k}\rvert}\right)_+\hspace{-2mm}-\left(1-\frac{\chi (z_k)_+}{\lvert\nabla\rho_{\eps,k}\rvert}\right)_+\right]^2|\na\rho_{\eps,k}|^2\dx\dt \nonumber\\
			&- \frac{\eps}{2}\int_0^T\hspace{-2mm}\int_{\Rd}\lvert\na(\rho_{\eps,n}-\rho_{\eps,k})\rvert^2\dx\dt,
	\end{align}
 
using the Young's inequality in the second estimate.
Now come two key points in the argument. The first one is that, due to monotonicity, we have
\begin{align*}
	-\tilde \sigma\int_0^T\hspace{-2mm}\int_{\Rd}\left[\left(1-\frac{\chi (z_n)_+}{\lvert\nabla\rho_{\eps,n}\rvert}\right)_+\na\rho_{\eps,n}-\left(1-\frac{\chi (z_n)_+}{\lvert\nabla\rho_{\eps,k}\rvert}\right)_+\na\rho_{\eps,k}\right]\na(\rho_{\eps,n}-\rho_{\eps,k})\dx\dt\leq0.
\end{align*}
The second one is that $\left(1-\frac{\chi (z_n)_+}{\lvert\nabla\rho_{\eps,k}\rvert}\right)_+$ is Lipschitz continuous with respect to $(z_n)_+$. In fact, we have the following
\begin{align*}
	\left|\left(1-\frac{\chi (z_n)_+}{\lvert\nabla\rho_{\eps,k}\rvert}\right)_+-\left(1-\frac{\chi (z_k)_+}{\lvert\nabla\rho_{\eps,k}\rvert}\right)_+\right|\leq\frac{\chi}{|\na\rho_{\eps,k}|}|z_n-z_k|.
\end{align*}
Introducing this in \eqref{continuity-computation} yields:
\begin{align*}
			\left(\int_{\Rd}\rvert\rho_{\eps,n}-\rho_{\eps,k}\lvert^2\dx\right)(t)\leq&\;\frac{\chi \tilde \sigma^2}{2\eps}\int_0^T\hspace{-2mm}\int_{\Rd}\lvert (z_n)_+-(z_k)_+\rvert^2\dx\dt\\
   &-\frac{\eps}{2}\int_0^T\hspace{-2mm}\int_{\Rd}\lvert\na(\rho_{\eps,n}-\rho_{\eps,k})\rvert^2\dx\dt.
	\end{align*}%
	We conclude that $\{\na\rho_{\eps,n}\}$ is a Cauchy sequence in $L^2(0,T,L^2(\Rd)))$, and therefore, $\na\rho_{\eps,n}\rightarrow\na\rhoe$ a.e. $x\in[0,T]\times\Rd$. 
	
	Summarizing, we have shown that the map $\mathcal{T}$ is continuous, compact and constant when $\tilde \sigma=0$. Moreover, fixed points are uniformly bounded in $\tilde \sigma$. Then, Leray-Schauder fixed-point  theorem ensure the existence of $\rho_\eps$ such that  $\mathcal{T}(1,\rhoe)=\rhoe$, which is the solution to the nonlinear problem 
	\begin{align*}
		\int_0^T\int_{\Rd}\partial_t\rhoe\varphi\dx\dt=&-\int_0^T\int_{\Rd}\left(1-\frac{\chi{\rhoe}_+}{\lvert\nabla\rhoe\rvert}\right)_+\nabla\rhoe\nabla\varphi\dx\dt-\eps\int_0^T\int_{\Rd}\nabla\rhoe\nabla\varphi\dx\dt\nonumber\\
		&-\eps\int_0^T\int_{\Rd}\rhoe\varphi\dx\dt. 
	\end{align*}
To conclude the proof of existence of solutions to  \eqref{smooth.eq_bis}, it remains to show that  $\rhoe$ is nonnegative. For that, use $(\rho_{\eps})_-$ in the above weak formulation and obtain
	
	\begin{align*}
		\left(\int_{\Rd}(\rho_{\eps})_-^2\dx\right)(t)\leq\int_{\Rd}(\rho_{ { \rm in},\eps})_-^2\dx.
	\end{align*}
	Since $\rho_{ { \rm in},\eps}\geq0$ the last inequality implies ${\rho_{\eps}}_- \equiv 0$. This concludes the proof.

	

\section{The vanishing viscosity limit and proof of the main theorem}\label{sec_viscosity}
The aim of this section is to remove the viscosity terms $\eps\Delta\rhoe$ and $-\eps\rhoe$ from (\ref{smooth.eq_bis}).  As explained in the introduction, the main difficulty  is to pass to the limit in the nonlinear term $\left(1-\frac{\chi\rhoe}{|\na\rhoe|}\right)_+\na\rhoe$ and  show that
\begin{align*}
	\left(1-\frac{\chi\rhoe}{|\na\rhoe|}\right)_+\na\rhoe\rightharpoonup\left(1-\frac{\chi\rho}{|\na\rho|}\right)_+\na\rho\quad\mbox{in }L^2(0,T;\Rd).
\end{align*} 
From Proposition \ref{Proposition LP}  we know that
\begin{align*}
	\left\|\left(1-\frac{\chi\rhoe}{|\na\rhoe|}\right)_+\na\rhoe\right\|_{L^2(0,T;\Rd)}\leq C,
\end{align*}
which implies,
\begin{align*}
		\left(1-\frac{\chi\rhoe}{|\na\rhoe|}\right)_+\na\rhoe\rightharpoonup G\quad\mbox{in }L^2(0,T;\Rd).
\end{align*}
Moreover, Lemma \eqref{compactness.lemma} implies $\rhoe \to \rho$ a.e.. The crux is to show that $G=\left(1-\frac{\chi\rho}{|\na\rho|}\right)_+\na\rho$.

Using the Boltzmann relative entropy we will show that $\na\rhoe $ converges a.e. to $\na\rho $; in particular we will prove that
\begin{align*}
	\na\rhoe\rightarrow\na\rho\quad\mbox{a.e. in }\;{((0,T)\times\Rd)\backslash\{|\nabla\rho|\leq\chi\rho\}.}
\end{align*}
Let's briefly sketch how we get the last a.e. convergence; the  \emph{relative entropy} between two functions $u$ and $v$ is defined as:
\begin{align}\label{rel.entr}
	H_{{\rm rel}}[u\vert v]\equiv\int_{\Rd}\left( u\log\frac{u}{v} - u + v \right) \dx.
\end{align}
Notice that, for $u, v\geq 0$
\begin{align*}
	H_{{\rm rel}}[u\vert v](T)\geq 0,\quad\mbox{as}\quad
	r\log(r)-r+1\geq0, \quad\mbox{for all}\quad r\geq0,
\end{align*}
and $H[u|v]=0$ if and only if $u=v$. 
Consider $\rhoe$ and $\rhod$ two viscosity solutions to \eqref{smooth.eq_bis} with viscosity constants $\eps$ and $\delta$ respectively. A formal computation shows that 
\begin{align}\label{entropy comp 0}
	H[\rhoe|\rhod](T)-H[\rho_{{\rm in},\eps}|\rho_{{\rm in},\delta}]+\int_0^T\mathcal{D}[\rhoe|\rhod]\dt= \mbox{error terms.}
\end{align}
It turns out that the relative entropy production $\mathcal{D}[\rhoe|\rhod]$ is nonnegative, a direct consequence of the monotonicity property (\ref{monotonicity_prop}). 
The error terms originate from the viscosities $\varepsilon \Delta \rhoe$ and $\delta \Delta \rhod$. 
 As $\eps,\delta\rightarrow0$ these error terms vanish, as well as $H[\rho_{{\rm in},\eps}|\rho_{{\rm in},\delta}]\rightarrow0$. Since both $H$ and $\mathcal{D}$ are nonnegative, as $\delta , \varepsilon \to 0$, we must have 
\begin{align*}
	H[\rhoe|\rhod](T)\rightarrow0\quad\mbox{and} \quad \int_0^T\mathcal{D}[\rhoe|\rhod]\dt\rightarrow0.
\end{align*}
From the convergence 
\begin{align*}
\int_0^T\mathcal{D}[\rhoe|\rhod]\dt\rightarrow 0,
\end{align*}
we will obtain the almost everywhere convergence of $\na\rhoe$ to $\na\rho$ {in the nondegenerate region $\{|\nabla\rho|>\chi\rho\}$} and consequently will be able to identify  the limit function $G$ as $G = \left(1-\frac{\rho}{|\na\rho |}\right)_+\na\rho $. The rigorous proof of this argument is the content of sections \ref{sec:entropy_limit} and \ref{sec4.2}.

Then, in the last two subsections, we will prove the $L^p$ convergence of $\rho$ to initial data $\rhoin$ and the uniqueness.\subsection{Contractivity of the relative entropy}\label{sec:entropy_limit}

In what follows, we will make the argument above rigorous. We start by  testing  \eqref{smooth.eq_bis} for $\rhoe$ and for $\rhod$ respectively with test functions $\varphi_1=\log\left(\frac{\rhoe}{\rhod}\right)$ and $\varphi_2=\frac{\rhoe}{\rhod}-1$ (this will formally give $\frac{d}{dt} H[\rhoe|\rhod]$). Since neither $\varphi_1$ nor $\varphi_2$ are admissible test functions, we modify them accordingly, at the expense of some error terms. Specifically, our test functions are of the form
\begin{align*}
	\varphi_1=\log\left(\frac{\rhoe+\sigma}{\rhod+\sigma}\right),\;\;\mbox{and }\;\varphi_2=\frac{\rhoe+\sigma}{\rhod+\sigma}-1,
\end{align*}
with $\sigma>0$ a small constant. 
We point out that $\varphi_1$, $\varphi_2$ are indeed admissible test functions as they belong to $L^2(0,T; H^1(\R^d))$. For example,
$$
|\varphi_1|\leq \log(1+\rhoe/\sigma) + \log(1+\rhod/\sigma)
\leq (\rhoe + \rhod )/\sigma\in L^\infty(0,T; L^1\cap L^\infty(\R^d)).
$$
The integrability of $\varphi_2$, $\nabla\varphi_1$, $\nabla\varphi_2$ can be easily checked.

With this choice of $\varphi_1$ and $\varphi_2$ we obtain 
\begin{align}\label{equiv_entr}
	\int_{\Rd}&\left[ \varphi_1 \pa_t\rhoe -\varphi_2 \pa_t\rhod \right]\dx=\frac{d}{dt}\int_{\Rd}\left[(\rhoe+\sigma)\log{\left(\frac{\rhoe+\sigma}{\rhod+\sigma}\right)}-\rhoe+\rhod\right] \dx.
\end{align}
  This yields:
\begin{align}\label{entropy.derivative}
	 \int_0^T \int_{\Rd}\left[ \varphi_1 \pa_t\rhoe -\varphi_2 \pa_t\rhod \right]\dx\dt =& \int_0^T\hspace{-2mm}\int_{\Rd}\left[ \left(\log\frac{\rhoe+\sigma}{\rhod+\sigma}\right)\pa_t\rhoe +\left(1-\frac{\rhoe+\sigma}{\rhod+\sigma} \right)
	\;\pa_t\rhod \right] \dx\dt\\
	=&-\int_0^T\hspace{-2mm}\int_{\Rd}\left(1-\frac{\chi\rhoe}{\lvert\na\rhoe\rvert}\right)_+\na\rhoe\cdot \na\left(\log\frac{\rhoe+\sigma}{\rhod+\sigma}\right)
	\dx\dt\nonumber\\
	&+ \int_0^T\hspace{-2mm}\int_{\Rd}\left(1-\frac{\chi\rhod}{\lvert\na\rhod\rvert}\right)_+\na\rhod\cdot\na\left(\frac{\rhoe+\sigma}{\rhod+\sigma}\right)\dx\dt+E_{\eps,\delta,\sigma}\nonumber,
\end{align}
where $E_{\eps,\delta,\sigma}$ are error terms, which must vanish when $\eps,\delta,\sigma\rightarrow0$. Notice that \eqref{equiv_entr} is an approximation of $\int_0^T\frac{d}{\dt}H_{{\rm rel}}[\rhoe\vert\rhod] \;dt$. We will show 
\begin{align*}
	\lim\limits_{\eps, \delta\rightarrow0}\int_0^T
	\left(
	\frac{d}{\dt}H_{{\rm rel}}[\rhoe\vert\rhod]
	{ + \mathcal{D}[\rhoe\vert\rhod]}
	\right)\dt 
	\leq 0,
\end{align*} 
to find that, in fact, $\lim\limits_{\eps, \delta\rightarrow0}H_{{\rm rel}}[\rhoe\vert\rhod](T)=0$
and $\lim\limits_{\eps, \delta\rightarrow0}\int_0^T \mathcal{D}[\rhoe\vert\rhod]\dt = 0$.
These limits will provide the necessary information about the convergence of $\nabla\rhoe$. 

Let us first analyze the error terms:
\begin{subequations}
\begin{align}
E_{\eps,\delta,\sigma}=&-\eps\int_0^T\hspace{-2mm}\IRd\rhoe\log\frac{\rhoe+\sigma}{\rhod+\sigma}\dx\dt+\delta\int_0^T\hspace{-2mm}\IRd\rhod{\left(\frac{\rhoe+\sigma}{\rhod+\sigma} - 1\right)}\dx\dt\label{error.0}\\
&-\int_0^T\hspace{-2mm}\IRd\left[\eps\na\rhoe\na\left(\log\frac{\rhoe+\sigma}{\rhod+\sigma}\right)-\delta\na\rhod\na\left(\frac{\rhoe+\sigma}{\rhod+\sigma}\right)\right]\dx\dt.\label{error.1}
\end{align}
\end{subequations}
{We notice first that
	\begin{align}\label{elem.log}
		\left|\sigma\log\left(
		\frac{a+\sigma}{b+\sigma}
		\right) \right|
		= \left|\sigma\log\left(1 + \frac{a}{\sigma}\right)
		-\sigma\log\left(1 + \frac{b}{\sigma}\right)\right|
		\leq a+b\quad\forall a,b\geq 0.
	\end{align}
}
For the first term in $E_{\eps,\delta}$, assuming without loss of generality $\eps<\delta<\frac{1}{2}$ we compute {(remember that $H_{rel}[\rhoe\vert\rhod]\geq 0$)}
\begin{align}\label{error.calculus.1}
\eqref{error.0}
=&-\eps H_{rel}[\rhoe\vert\rhod]+\eps\sigma\int_0^T\hspace{-2mm}\IRd\log\frac{\rhoe+\sigma}{\rhod+\sigma}\dx\dt-\eps\int_0^T\hspace{-2mm}\IRd(\rhoe-\rhod)\dx\dt\nonumber\\
&+\delta\int_0^T\hspace{-2mm}\IRd\rhod\frac{\rhoe - \rhod}{\rhod+\sigma}\dx\dt\nonumber\\
\leq&\; \eps\int_0^T\hspace{-2mm}\IRd (\rhoe+\rhod) \dx\dt
+\eps\int_0^T\hspace{-2mm}\IRd\rhod \dx\dt
 +\delta\int_0^T\hspace{-2mm}\IRd \rhoe \dx\dt\nonumber\\
\leq& 2(\eps+\delta)\int_0^T\hspace{-2mm}\IRd
\left(\rhoe+\rhod\right)\dx\dt\nonumber\\
\leq& {C(\eps + \delta)},
\end{align} where $C>0$ only depends on the $L^1$ norm of the initial data.  From now on, $C$ will be a positive universal constant that can change from line to line. %

For expression \eqref{error.1}, we compute
\begin{align}\label{error.calculus.2}
\mbox{\eqref{error.1}}=&-\eps\int_0^T\hspace{-2mm}\IRd\frac{\vert\na\rhoe\vert^2}{\rhoe+\sigma}\dx\dt+(\eps+\delta)\int_0^T\hspace{-2mm}\IRd\frac{\na\rhoe\na\rhod}{\rhod+\sigma}\dx\dt\nonumber\\
&-\delta\int_0^T\hspace{-2mm}\IRd(\rhoe+\sigma)\left\vert\frac{\na\rhod}{\rhod+\sigma}\right\vert^2\dx\dt\nonumber\\
\leq&\frac{(\eps+\delta)}{2}\int_0^T\hspace{-2mm}\IRd\frac{\vert\na\rhoe\vert^2}{\rhoe+\sigma}\dx\dt+\frac{(\eps+\delta)}{2}\int_0^T\hspace{-2mm}\IRd(\rhoe+\sigma)\left\vert\frac{\na\rhod}{\rhod+\sigma}\right\vert^2\dx\dt\nonumber\\
&-\delta\int_0^T\hspace{-2mm}\IRd(\rhoe+\sigma)\left\vert\frac{\na\rhod}{\rhod+\sigma}\right\vert^2\dx\dt\nonumber\\
\leq&\frac{(\eps+\delta)}{2}\int_0^T\hspace{-2mm}\IRd\frac{\vert\na\rhoe\vert^2}{\rhoe}\dx\dt\nonumber\\
\leq& {C(\eps + \delta)},
\end{align}
using Lemma \ref{grad.log.estimate} in the last inequality.
Summarizing, by \eqref{error.calculus.1} and \eqref{error.calculus.2} we have the following estimate for the error term
\begin{align}\label{error_total_estimate}
	E_{\eps,\delta}
	{\leq C(\eps + \delta)}.
\end{align}
Now, let us see what happens with the remaining terms in \eqref{entropy.derivative}. These are
\begin{align}\label{main term entropy derivative}
& {I_3} := -\int_0^T\hspace{-2mm}\IRd\left(1-\frac{\chi\rhoe}{\lvert\na\rhoe\rvert}\right)_+\na\rhoe \na\left(\log\frac{\rhoe+\sigma}{\rhod+\sigma}\right)
\dx\dt,\\
&{I_4} := \int_0^T\hspace{-2mm}\IRd\left(1-\frac{\chi\rhod}{\lvert\na\rhod\rvert}\right)_+
\na\rhod\na\left(\frac{\rhoe+\sigma}{\rhod+\sigma}\right)
\dx\dt.
	\end{align}

We can reformulate {$I_3$} as 
\begin{align*}
{I_3}=&-\int_0^T\hspace{-2mm}\IRd\left(1-\frac{\chi(\rhoe+\sigma)}{\lvert\na\rhoe\rvert}\right)_+\na\rhoe \na\left(\log\frac{\rhoe+\sigma}{\rhod+\sigma}\right)
\dx\dt\\
&-\int_0^T\hspace{-2mm}\IRd\left[\left(1-\frac{\chi\rhoe}{\lvert\na\rhoe\rvert}\right)_+-\left(1-\frac{\chi(\rhoe+\sigma)}{\lvert\na\rhoe\rvert}\right)_+\right]\na\rhoe \na\left(\log\frac{\rhoe+\sigma}{\rhod+\sigma}\right)
\dx\dt .
\end{align*}
{We rewrite $I_4$ in a similar way. It follows}
\begin{align*}
 {I_3 + I_4}=& \; -\int_0^T\hspace{-2mm}\IRd\left(1-\frac{\chi(\rhoe+\sigma)}{\lvert\na\rhoe\rvert}\right)_+\na\rhoe \na\left(\log\frac{\rhoe+\sigma}{\rhod+\sigma}\right)
 \dx\dt\\
&+\int_0^T\hspace{-2mm}\IRd\left(1-\frac{\chi(\rhod+\sigma)}{\lvert\na\rhod\rvert}\right)_+\na\rhod\na\left(\frac{\rhoe+\sigma}{\rhod+\sigma}\right)\dx\dt + R_1 + R_2,
\end{align*}
where
\begin{align*}
& R_1: =-\int_0^T\hspace{-2mm}\IRd\left[\left(1-\frac{\chi\rhoe}{\lvert\na\rhoe\rvert}\right)_+-\left(1-\frac{\chi(\rhoe+\sigma)}{\lvert\na\rhoe\rvert}\right)_+\right]\na\rhoe \na\left(\log\frac{\rhoe+\sigma}{\rhod+\sigma}\right)
\dx\dt,\\
& R_2:=\int_0^T\hspace{-2mm}\IRd\left[\left(1-\frac{\chi\rhod}{\lvert\na\rhod\rvert}\right)_+-\left(1-\frac{\chi(\rhod+\sigma)}{\lvert\na\rhod\rvert}\right)_+\right]\na\rhod\na\left(\frac{\rhoe+\sigma}{\rhod+\sigma}\right)\dx\dt.
\end{align*}
We recall that 
\begin{align}\label{lipschitz condition}
	\left|\left(1-\frac{\chi\rhoe}{\lvert\na\rhoe\rvert}\right)_+-\left(1-\frac{\chi(\rhoe+\sigma)}{\lvert\na\rhoe\rvert}\right)_+\right|\leq\frac{\chi\sigma}{|\na\rhoe|},
\end{align}
and also that 
\begin{align*}
	0\leq\left[\left(1-\frac{\chi\rhoe}{\lvert\na\rhoe\rvert}\right)_+-\left(1-\frac{\chi(\rhoe+\sigma)}{\lvert\na\rhoe\rvert}\right)_+\right].
\end{align*}
Similar bounds hold in $R_2$ for the term with $\rhod$.
Using this, we estimate $R_1$ as follows:
\begin{align*}
R_1\leq&\int_0^T\hspace{-2mm}\IRd\left[\left(1-\frac{\chi\rhoe}{\lvert\na\rhoe\rvert}\right)_+-\left(1-\frac{\chi(\rhoe+\sigma)}{\lvert\na\rhoe\rvert}\right)_+\right]\na\rhoe\frac{\na\rhod}{\rhod+\sigma}\dx\dt\\
	\leq& \; \chi\int_0^T\hspace{-2mm}\IRd\frac{\sigma|\na\rhod|}{\rhod+\sigma}\dx\dt.
\end{align*}
{For fixed $\eps,\delta>0$ the expression above tends to zero as $\sigma\to 0$. Indeed, consider the sequence of functions
	$F_\sigma = \frac{\sigma|\na\rhod|}{\rhod+\sigma}$. It holds
	$F_\sigma \leq \overline{F}\equiv |\na\rhod| = 2\sqrt{\rhod}|\nabla\sqrt{\rhod}|\in L^2(0,T; L^1(\R^d))$. Furthermore $F_\sigma\to 0$ a.e.~in $\{\rhod>0\}$ as $\sigma\to 0$, while $F_\sigma = |\na\rhod| = 2\sqrt{\rhod}|\nabla\sqrt{\rhod} = 0$ a.e.~in $\{\rhod=0\}$. Hence by dominated convergence we obtain that $F_\sigma\to 0$ strongly in $L^1(0,T; L^1(\R^d))$.
	This means
	\begin{align}\label{limsup.R1}
		\limsup_{\sigma\to 0}R_1\leq 0\quad\forall\eps,\delta>0.
	\end{align}
}
We estimate $R_2$ in a similar way:
\begin{align*}
R_2\leq&\int_0^T\hspace{-2mm}\IRd\left[\left(1-\frac{\chi\rhod}{\lvert\na\rhod\rvert}\right)_+-\left(1-\frac{\chi(\rhod+\sigma)}{\lvert\na\rhod\rvert}\right)_+\right]\frac{\na\rhod\na\rhoe}{\rhod+\sigma}\dx\dt
{\leq\int_0^T\hspace{-2mm}\IRd G_\sigma\dx\dt , }
\end{align*}
{with 
$$G_\sigma\equiv \left[\left(1-\frac{\chi\rhod}{\lvert\na\rhod\rvert}\right)_+-\left(1-\frac{\chi(\rhod+\sigma)}{\lvert\na\rhod\rvert}\right)_+\right]\frac{|\na\rhod| |\na\rhoe|}{\rhod+\sigma}.$$
We first observe that (thanks to \eqref{lipschitz condition})
$$ 0\leq
G_\sigma \leq \chi\sigma\frac{|\na\rhoe|}{\rhod+\sigma}\leq 
\chi |\na\rhoe|\equiv \overline{G}
$$
and $\overline{G}$ is a $\sigma-$independent integrable function. The same estimate also tells us that $G_\sigma\to 0$ a.e.~on $\{\rhod>0\}$ as $\sigma\to 0$. On the other hand, on $\{\rhod=0\}$ $G_\sigma$ equals a bounded factor (the one between square brackets) times $|\nabla\rhoe|$ (which is a.e.~finite because it is integrable) times $|\nabla\rhod|$, which vanishes a.e.~in $\{\rhod=0\}$ as previously noticed. Again, via the dominated convergence theorem we infer that $G_\sigma\to 0$ strongly in $L^1(0,T; L^1(\R^d))$ as $\sigma\to 0$, for fixed (arbitrary) $\eps$, $\delta>0$. Hence we obtain
\begin{align}\label{limsup.R2}
\limsup_{\sigma\to 0}R_2\leq 0\quad\forall\eps,\delta>0.
\end{align}
}



%
{Straightforward computations lead to}
\begin{align*}
{I_3 + I_4}
	=&-\int_0^T\hspace{-2mm}\IRd \frac{\rhoe+\sigma}{2}\left[\left(1-\frac{\chi(\rhoe+\sigma)}{\lvert\na\rhoe\rvert}\right)_+\hspace{-2mm}-\left(1-\frac{\chi(\rhod+\sigma)}{\lvert\na\rhod\rvert}\right)_+\right]\times\\
	&\times\left(\lvert\na\log(\rhoe+\sigma)\rvert^2-\lvert\na\log(\rhod+\sigma)\rvert^2\right)\dx\dt\\
	&-\int_0^T\hspace{-2mm}\IRd \frac{\rhoe+\sigma}{2}\left\vert\na\log\left(\frac{\rhoe+\sigma}{\rhod+\sigma}\right)\right\vert^2\left[\left(1-\frac{\chi(\rhoe+\sigma)}{\lvert\na\rhoe\rvert}\right)_++\left(1-\frac{\chi(\rhod+\sigma)}{\lvert\na\rhod\rvert}\right)_+\right]\dx\dt\\ 
 &{+R_1 + R_2}\\
	\leq&\;-\int_0^T\hspace{-2mm}\IRd \frac{\rhoe+\sigma}{2}\left[\left(1-\frac{\chi(\rhoe+\sigma)}{|\na\rhoe|}\right)_+-\left(1-\frac{\chi(\rhod+\sigma)}{|\na\rhod|}\right)_+\right]^2\dx\dt\\
	&-\int_0^T\hspace{-2mm}\IRd \frac{\rhoe+\sigma}{2}\left\vert\na\log\left(\frac{\rhoe+\sigma}{\rhod+\sigma}\right)\right\vert^2\left[\left(1-\frac{\chi(\rhoe+\sigma)}{\lvert\na\rhoe\rvert}\right)_++\left(1-\frac{\chi(\rhod+\sigma)}{\lvert\na\rhod\rvert}\right)_+\right]\dx\dt\\
	&+ R_1+R_2,
\end{align*}
where, in the second inequality, we have used that
\begin{align*}
	\left[\left(1-\frac{\chi}{r}\right)_+-\left(1-\frac{\chi}{s}\right)_+\right]^2\leq C\left[\left(1-\frac{\chi}{r}\right)_+-\left(1-\frac{\chi}{s}\right)_+\right]\left(r^2-s^2\right), \quad\mbox{for any}\quad r, s>0.
\end{align*}
\normalsize
Therefore, \eqref{equiv_entr} and \eqref{entropy.derivative} give
\begin{align}\label{entr.comp.99}
&\left(\int_{\Rd}\left[(\rhoe+\sigma)\log{\left(\frac{\rhoe+\sigma}{\rhod+\sigma}\right)}-\rhoe+\rhod\right]\dx\right)(T)\nonumber \\
&\qquad {- \int_{\Rd}\left[(\rho_{in,\eps}+\sigma)\log{\left(\frac{\rho_{in,\eps}+\sigma}{\rho_{in,\delta}+\sigma}\right)}-\rho_{in,\eps}+\rho_{in,\delta}\right]\dx}
 \nonumber\\
&={\IRd (\rhod(T) - \rho_{in,\delta})\phi_R\dx}\nonumber\\
&\;\;\; -\frac{1}{2}\int_0^T\hspace{-2mm}\IRd(\rhoe+\sigma)\left[\left(1-\frac{\chi(\rhoe+\sigma)}{|\na\rhoe|}\right)_+-\left(1-\frac{\chi(\rhod+\sigma)}{|\na\rhod|}\right)_+\right]^2\dx\dt\nonumber\\
	&\;\;\;-\frac{1}{2}\int_0^T\hspace{-2mm}\IRd(\rhoe+\sigma)\left\vert\na\log\left(\frac{\rhoe+\sigma}{\rhod+\sigma}\right)\right\vert^2\left[\left(1-\frac{\chi(\rhoe+\sigma)}{\lvert\na\rhoe\rvert}\right)_+\hspace{-2.5mm}+\left(1-\frac{\chi(\rhod+\sigma)}{\lvert\na\rhod\rvert}\right)_+\right]\dx\dt\nonumber\\
	&\;\;\;+R_1+R_2 + E_{\eps,\delta,\sigma},
\end{align}
where 
{$R_1+R_2\to 0$ as $\sigma\to 0$, for every $\eps,\delta>0$, while $E_{\eps,\delta,\sigma}\leq C(\eps+\delta)$ for every $\sigma>0$.}

Now, we are going to find a bound for ${\IRd (\rhod(T) - \rho_{in,\delta})\dx}$. This is done in a rigorous way by employing a cutoff $\phi_R = \phi_R(x)$ as test function in \eqref{smooth.eq_bis}. The cutoff $\phi_R$ has the following properties:
$$
\phi_R\in C_c^\infty(\R^d),~~ 0\leq\phi_R\leq 1\mbox{ in }\R^d,~~ \phi_R=1\mbox{ on }B_R,~~\phi_R=0\mbox{ on }\R^d\backslash B_{2R},~~|\nabla\phi_R|\leq CR^{-1}\mbox{ on }\R^d.
$$
We obtain
\begin{align}\label{entr.comp.100}
\left(\int_{\Rd}\rhod{\phi_R}\dx\right)(T)=&\int_{\Rd}\rho_{{\rm in},\delta}{\phi_R}\dx-\int_0^T\int_{\Rd}\left(1-\frac{\chi\rhod}{\lvert\nabla\rhod\rvert}\right)_+\nabla\rhod\nabla\phi_R\dx\dt\nonumber\\
	&-\delta\int_0^T\int_{\Rd}\nabla\rhod\nabla\phi_R\dx\dt-\delta\int_0^T\int_{\Rd}\rhod\phi_R\dx\dt.
\end{align}
Since $|\na\phi_R|\leq\frac{C}{R}$ we have that 
\begin{align*}
\left\lvert\int_0^T\int_{\Rd}\left(1-\frac{\chi\rhod}{\lvert\nabla\rhod\rvert}\right)_+\nabla\rhod\nabla\phi_R\dx\dt\right\rvert \leq\frac{C}{R}\|\na\sqrt{\rhod}\|_2\|\rhod\|_1^{1/2}
{\leq \frac{C}{R}}.
\end{align*}
{A similar estimate can be obtained for}
the viscosity terms in \eqref{entr.comp.100}, and therefore we get that 
\begin{align*}
{\left|\left(\int_{\Rd}\rhod{\phi_R}\dx\right)(T) - \int_{\Rd}\rho_{{\rm in},\delta}{\phi_R}\dx\right|\leq  \frac{C}{R} + C\delta}.
\end{align*}
{Taking the limit $R\to \infty$ in the above estimate leads to}
\begin{align*}
{ \left|\left(\int_{\Rd}\rhod \dx\right)(T) - \int_{\Rd}\rho_{{\rm in},\delta} \dx\right|\leq C\delta}.
\end{align*}
Hence, taking the (superior) limit $\sigma\rightarrow0$ in \eqref{entr.comp.99} and applying Fatou's Lemma yield
\begin{align*}
&\limsup_{\sigma\to 0}
\int_0^T\hspace{-2mm}\IRd(\rhoe+\sigma)\left[\left(1-\frac{\chi(\rhoe+\sigma)}{|\na\rhoe|}\right)_+ \hspace{-4mm}-\left(1-\frac{\chi(\rhod+\sigma)}{|\na\rhod|}\right)_+\right]^2\dx\dt\leq 
H_{rel}[\rho_{in,\eps}\vert \rho_{in,\delta}] + C(\eps + \delta),\\
&\limsup_{\sigma\rightarrow0}\int_0^T\hspace{-2mm}\IRd(\rhoe+\sigma)\left\vert\na\log\left(\frac{\rhoe+\sigma}{\rhod+\sigma}\right)\right\vert^2 \cdot \nonumber\\
& \;\;\;\;\;\;\;\;\;\;\;\; \cdot \left[\left(1-\frac{\chi(\rhoe+\sigma)}{\lvert\na\rhoe\rvert}\right)_++\left(1-\frac{\chi(\rhod+\sigma)}{\lvert\na\rhod\rvert}\right)_+\right]\dx\dt
\leq 
H_{rel}[\rho_{in,\eps}\vert \rho_{in,\delta}] + C(\eps + \delta).
\end{align*}
We need now to compute the a.e.~limit as $\sigma\to 0$ of the integrands in the above relations. It holds:
\begin{align*}
&\mbox{On }\{\rhoe>0\}:~~
\left(1-\frac{\chi(\rhoe+\sigma)}{|\na\rhoe|}\right)_+
\to \left(1-\frac{\chi\rhoe}{|\na\rhoe|}\right)_+~~\mbox{ as }\sigma\to 0,\\
&\mbox{on }\{\rhoe=0\}:~~
\left(1-\frac{\chi(\rhoe+\sigma)}{|\na\rhoe|}\right)_+
=\left(1-\frac{\chi\sigma}{|\na\rhoe|}\right)_+ = 0,
\end{align*}
since $\nabla\rhoe = 0$ a.e.~on $\{\rhoe=0\}$. This means that
\begin{align*}
\left(1-\frac{\chi(\rhoe+\sigma)}{|\na\rhoe|}\right)_+
\to \left(1-\frac{\chi\rhoe}{|\na\rhoe|}\right)_+{\bf 1}_{\{\rhoe>0\}}\quad \mbox{ a.e.~in $\R^d\times (0,T)$ as }\sigma\to 0.
\end{align*}
Let us now consider
\begin{align*}
\na\log\left(\frac{\rhoe+\sigma}{\rhod+\sigma}\right) = 
\frac{\na\rhoe}{\rhoe + \sigma} - \frac{\na\rhod}{\rhod + \sigma}.
\end{align*}
It holds
\begin{align*}
&\mbox{On }\{\rhoe>0\}:~~
\frac{\na\rhoe}{\rhoe + \sigma}\to 
\frac{\na\rhoe}{\rhoe}\quad\mbox{ as }\sigma\to 0,\\
&\mbox{on }\{\rhoe = 0\}:~~
\frac{\na\rhoe}{\rhoe + \sigma} = \frac{\na\rhoe}{\sigma} = 0,
\end{align*}
again because $\nabla\rhoe = 0$ a.e.~on $\{\rhoe=0\}$. It follows
\begin{align*}
\na\log\left(\frac{\rhoe+\sigma}{\rhod+\sigma}\right)\to 
\frac{\na\rhoe}{\rhoe}{\bf 1}_{\{\rhoe>0\}} - 
\frac{\na\rhod}{\rhod}{\bf 1}_{\{\rhod>0\}}\quad\mbox{ as }\sigma\to 0.
\end{align*}
Hence we get
\begin{align*}
&\int_0^T\hspace{-2mm}\IRd \rhoe\left[\left(1-\frac{\chi\rhoe}{|\na\rhoe|}\right)_+{\bf 1}_{\{\rhoe>0\}} -\left(1-\frac{\rhod}{|\na\rhod|}\right)_+{\bf 1}_{\{\rhod>0\}}
	\right]^2\dx\dt\leq H_{rel}[\rho_{in,\eps}\vert \rho_{in,\delta}] + C(\eps + \delta),\\
&\int_0^T\hspace{-2mm}\IRd\rhoe\left\vert
	\frac{\na\rhoe}{\rhoe} {\bf 1}_{\{ \rhoe>0 \}} - 
	\frac{\na\rhod}{\rhod} {\bf 1}_{\{\rhod>0\}}
	\right\vert^2 \cdot \nonumber\\
	& \;\;\;\;\;\;\;\;\;\;\;\; \cdot 
	\left[
	\left(1-\frac{\chi\rhoe}{\lvert\na\rhoe\rvert}\right)_+{\bf 1}_{\{\rhoe>0\}}
	+\left(1-\frac{\chi\rhod}{\lvert\na\rhod\rvert}\right)_+{\bf 1}_{\{\rhod>0\}}
	\right]\dx\dt\leq H_{rel}[\rho_{in,\eps}\vert \rho_{in,\delta}] + C(\eps + \delta).
\end{align*}
Therefore by taking the limit $\eps,\delta\to 0$ in the above estimates we obtain
\begin{subequations}
\begin{align}
&\lim_{\epsilon,\delta\rightarrow0}
\int_0^T\hspace{-2mm}\IRd \rhoe\left[\left(1-\frac{\chi\rhoe}{|\na\rhoe|}\right)_+{\bf 1}_{\{\rhoe>0\}} -\left(1-\frac{\rhod}{|\na\rhod|}\right)_+{\bf 1}_{\{\rhod>0\}}
\right]^2\dx\dt=0,\label{going_to_0_1}\\
&\lim_{\epsilon,\delta\rightarrow 0}
\int_0^T\hspace{-2mm}\IRd\rhoe\left\vert
\frac{\na\rhoe}{\rhoe} {\bf 1}_{\{ \rhoe>0 \}} - 
\frac{\na\rhod}{\rhod} {\bf 1}_{\{\rhod>0\}}
\right\vert^2 \cdot \nonumber\\
& \;\;\;\;\;\;\;\;\;\;\;\; \cdot 
\left[
\left(1-\frac{\chi\rhoe}{\lvert\na\rhoe\rvert}\right)_+{\bf 1}_{\{\rhoe>0\}}
+\left(1-\frac{\chi\rhod}{\lvert\na\rhod\rvert}\right)_+{\bf 1}_{\{\rhod>0\}}
\right]\dx\dt=0.\label{going_to_0_2}
\end{align}
\end{subequations}

\subsection{Identification of the limit of the flux} \label{sec4.2}
Our goal is to show that
\begin{align*}
\left(1-\frac{\chi\rhoe}{\lvert\na\rhoe\rvert}\right)_+\na\rhoe\rightarrow\left(1-\frac{\chi\rho}{\lvert\na\rho\rvert}\right)_+\na\rho\quad\mbox{a.e. in } (0,T)\times \Rd.
\end{align*}
The above a.e.~convergence relation will then allow us to identify the weak $L^2$ limit of the term $\left(1-\frac{\chi\rhoe}{\lvert\na\rhoe\rvert}\right)_+\na\rhoe$.
By Lemma \ref{compactness.lemma}, we know that 
\begin{align*}
	\rhoe\rightarrow\rho\quad \mbox{strongly in } L^2((0,T)\times\Rd).
\end{align*}
Given mass conservation it follows that $\rhoe\to\rho$ strongly in $L^p((0,T)\times\Rd)$ for every $p>1$. Putting this fact together with the uniform moment bounds for $\rhoe$ we deduce that
\begin{align*}
\rhoe\rightarrow\rho\quad \mbox{strongly in } L^1((0,T)\times\Rd).
\end{align*}
From the above convergence property and \eqref{going_to_0_1} one easily obtains
\begin{align*}
\lim_{\epsilon,\delta\rightarrow0}
\int_0^T\hspace{-2mm}\IRd \rho\left[\left(1-\frac{\chi\rhoe}{|\na\rhoe|}\right)_+{\bf 1}_{\{\rhoe>0\}} -\left(1-\frac{\rhod}{|\na\rhod|}\right)_+{\bf 1}_{\{\rhod>0\}}
\right]^2\dx\dt=0
\end{align*}
which means that the sequence $\left(\left(1-\frac{\chi\rhoe}{|\na\rhoe|}\right)_+{\bf 1}_{\{\rhoe>0\}}\right)_{\eps>0}$ is Cauchy in $L^2(\R^d\times (0,T),\rho\dx\dt)$. Therefore, a (unique) function $\Xi\in L^2(\R^d\times (0,T))$ exists such that 
\begin{align}\label{lim.Xi.L2}
\left(1-\frac{\chi\rhoe}{|\na\rhoe|}\right)_+{\bf 1}_{\{\rhoe>0\}}\rightarrow \Xi\quad\mbox{strongly in }L^2(\R^d\times (0,T),\rho\dx\dt).
\end{align}
In particular, up to subsequences it holds
\begin{align}\label{lim.Xi.ae}
\left(1-\frac{\chi\rhoe}{|\na\rhoe|}\right)_+{\bf 1}_{\{\rhoe>0\}}\rightarrow \Xi
\quad\mbox{a.e.~in }[0,T]\times\Rd\setminus\{\rho=0\}.
\end{align}
And on the other hand, from \eqref{going_to_0_2} we deduce the a.e.~convergence (up to subsequences) of the integrand:
\begin{align*}
\lim_{\epsilon,\delta\rightarrow 0}
\rhoe\left\vert
\frac{\na\rhoe}{\rhoe} {\bf 1}_{\{ \rhoe>0 \}} - 
\frac{\na\rhod}{\rhod} {\bf 1}_{\{\rhod>0\}}
\right\vert^2   
\left[
\left(1-\frac{\chi\rhoe}{\lvert\na\rhoe\rvert}\right)_+{\bf 1}_{\{\rhoe>0\}}
+\left(1-\frac{\chi\rhod}{\lvert\na\rhod\rvert}\right)_+{\bf 1}_{\{\rhod>0\}}
\right]=0,\\ \mbox{a.e.~in }\R^d\times (0,T)
\end{align*}
therefore
\begin{align*}
\lim_{\epsilon,\delta\rightarrow 0}\left\vert
\frac{\na\rhoe}{\rhoe} {\bf 1}_{\{ \rhoe>0 \}} - 
\frac{\na\rhod}{\rhod} {\bf 1}_{\{\rhod>0\}}
\right\vert = 0\quad\mbox{a.e.~on }[0,T]\times\Rd\setminus\{\{\rho=0\}\cup\{\Xi=0\}\}.
\end{align*}
This means that a.e.~in $\R^d\times (0,T)$ the sequence $(\frac{\na\rhoe}{\rhoe} {\bf 1}_{\{ \rhoe>0 \}})_{\eps>0}$ is Cauchy and therefore convergent towards some function $w$:
\begin{align}\label{lim.nalogrho.ae}
\frac{\na\rhoe}{\rhoe}{\bf 1}_{\{ \rhoe>0 \}}\rightarrow w\quad\mbox{a.e. in }[0,T]\times\Rd\setminus\{
\{\rho=0\}\cup\{\Xi=0\}\}.
\end{align}
Notice in particular that the a.e.~convergence of ${\bf 1}_{\{ \rhoe>0 \}}\na\rhoe$ in $\{\rho>0,\Xi>0\}$ follows (since $\rhoe$ is a.e.~convergent). Given that ${\bf 1}_{\{ \rhoe=0 \}}\na\rhoe = 0$ a.e.~in $\R^d\times (0,T)$, we infer the a.e.~convergence of $\na\rhoe$ on $\{\rho>0,\,\Xi>0\}$. Furthermore, from \eqref{lim.Xi.ae} and the a.e.~convergence of $\rhoe$ we deduce that 
\begin{align}\label{limsup.narho.ae}
\limsup_{\eps\to 0}|\na\rhoe|\leq \chi\rho\quad\mbox{a.e.~on }\{\rho>0,\,\Xi=0\} .
\end{align}
In particular the sequence $(\na\rhoe(x,t))_{\eps>0}$ is bounded for a.e.~$(x,t)\in\R^d\times (0,T)\cap\{\rho>0\}$ (with a bound dependent on $(x,t)$). 

Given the previous two a.e.~convergence relations we deduce
\begin{align*}
\rhoe^3\frac{\left\vert\na\rhoe\right\vert}{\rhoe}{\bf 1}_{\{ \rhoe>0 \}}
\left(\frac{\left\vert\na\rhoe\right\vert}{\rhoe}{\bf 1}_{\{ \rhoe>0 \}} +\chi\right)\left(1-\frac{\chi\rhoe}{\vert\na\rhoe\vert}\right)_+{\bf 1}_{\{ \rhoe>0 \}}\frac{\na\rhoe}{\rhoe}{\bf 1}_{\{ \rhoe>0 \}}\rightarrow \overline{w},
\end{align*}
almost everywhere in $[0,T]\times\Rd\setminus\{\rho=0\}$, or, which is equivalent,
\begin{align}\label{cnv.tmp.1}
{\bf 1}_{\{ \rhoe>0 \}}\left(\vert\na\rhoe\vert^2-\rhoe^2\chi^2\right)_+\na\rhoe\rightarrow \overline{w},
\end{align}
almost everywhere in $[0,T]\times\Rd\setminus\{\rho=0\}$.
In fact, while there is clearly convergence a.e.~on $\{\rho>0,\,\Xi>0\}$, on $\{\rho>0,\,\Xi=0\}$ the term $\na\rhoe$ is bounded thanks to \eqref{limsup.narho.ae} while $\left(1-\frac{\chi\rhoe}{\vert\na\rhoe\vert}\right)_+{\bf 1}_{\{ \rhoe>0 \}}$ tends to zero.

Notice that ${\bf 1}_{\{ \rhoe>0 \}}$ can be eliminated from \eqref{cnv.tmp.1} since $\left(\vert\na\rhoe\vert^2-\rhoe^2\chi^2\right)_+\na\rhoe = 0$ on $\{\rhoe=0\}$.
Moreover, as $\rhoe\rightarrow\rho$ a.e. in $(0,T)\times\Rd$ and the sequence $(\na\rhoe(x,t))_{\eps>0}$ is bounded for a.e.~$(x,t)\in\R^d\times (0,T)\cap\{\rho>0\}$, we infer 
\begin{align*}
	\left(\vert\na\rhoe\vert^2-\rho^2\chi^2\right)_+\na\rhoe\rightarrow \overline{w} ,
\end{align*}
almost everywhere in $[0,T]\times\Rd\setminus\{\rho=0\}$. 

Define $Q_{R,\tau,T}=\{[\tau,T]\times B_R \cap\{\rho>0\}\}$. 
Egorov's and Lusin's Theorems imply that, for every $\eta>0$, there exists $Q_{R,\tau,T}^\eta\subset Q_{R,\tau,T}$ compact such that $|Q_{R,\tau,T}\backslash Q_{R,\tau,T}^\eta|<\eta$, 
\begin{align*}
\left(\vert\na\rhoe\vert^2-\rho^2\chi^2\right)_+\na\rhoe\in C^0(Q_{R,\tau,T}^\eta),\\
\left(\vert\na\rhoe\vert^2-\rho^2\chi^2\right)_+\na\rhoe \to \overline{w}\quad \mbox{uniformly in }Q_{R,\tau,T}^\eta .
\end{align*}

Then, as $\na\rhoe\rightharpoonup\na\rho$ weakly in {$L^p(\R^d\times (0,T))$} we have that 
\begin{align*}
	&\int_{Q_{R,\tau,T}^\eta}\left[\left(\vert\na\rhoe\vert^2-\rho^2\chi^2\right)_+\na\rhoe-\left(\vert\na\rho\vert^2-\rho^2\chi^2\right)_+\na\rho\right](\na\rhoe-\na\rho)\dx\dt\\
	=&\int_{Q_{R,\tau,T}^\eta}\left[\left(\vert\na\rhoe\vert^2-\rho^2\chi^2\right)_+\na\rhoe-\overline{w}\right](\na\rhoe-\na\rho)\dx\dt\\
&+ \int_{Q_{R,\tau,T}^\eta}\left[\overline{w}-\left(\vert\na\rho\vert^2-\rho^2\chi^2\right)_+\na\rho\right](\na\rhoe-\na\rho)\dx\dt\rightarrow0.
\end{align*}
But, notice that 
\begin{subequations}
\begin{align}
&\int_{Q_{R,\tau,T}^\eta}\left[\left(\vert\na\rhoe\vert^2-\rho^2\chi^2\right)_+\na\rhoe-\left(\vert\na\rho\vert^2-\rho^2\chi^2\right)_+\na\rho\right](\na\rhoe-\na\rho)\dx\dt\nonumber\\
=&\frac{1}{2}\int_{Q_{R,\tau,T}^\eta}\left[\left(\vert\na\rhoe\vert^2-\rho^2\chi^2\right)_++\left(\vert\na\rho\vert^2-\rho^2\chi^2\right)_+\right]\times\lvert\na(\rhoe-\rho)\rvert^2\dx\dt\label{entr.computations.1}\\
&+\frac{1}{2}\int_{Q_{R,\tau,T}^\eta}\left[\left(\vert\na\rhoe\vert^2-\rho^2\chi^2\right)_+-\left(\vert\na\rho\vert^2-\rho^2\chi^2\right)_+\right]\times\left(\lvert\na\rhoe\rvert^2-\lvert\na\rho\rvert^2\right)\dx\dt\rightarrow0.\label{entr.computations.2}
\end{align}
\end{subequations}
{Since both integrals in \eqref{entr.computations.1}, \eqref{entr.computations.2} are nonnegative,} the convergence to $0$ of \eqref{entr.computations.1} implies that we can find a subsequence (not relabeled) such that $\na\rhoe\rightarrow\na\rho$ a.e. in $Q_{R,\tau,T}^\eta \setminus\{\rho\chi\geq\vert\na\rho\vert\}$. 
W.l.o.g.~we can choose $\eta = 1/N$, $N\in\N$, and assume that $Q_{R,\tau,T}^{1/N_1}\subset Q_{R,\tau,T}^{1/N_2}$ for $N_1 < N_2$.
Since $|Q_{R,\tau,T}\setminus Q_{R,\tau,T}^\eta|<\eta$, a Cantor diagonal argument
yields the existence of a subsequence (not relabeled) such that $\na\rhoe\rightarrow\na\rho$ a.e. in $Q_{R,\tau,T} \setminus\{\rho\chi\geq\vert\na\rho\vert\}$.
Via another Cantor diagonal argument we conclude that (up to a subsequence) 
\begin{equation}\label{p.cnv.rhoe}
\na\rhoe\rightarrow\na\rho~~\mbox{ a.e. in }(0,T)\times \Rd\cap\{0< \rho\chi < \vert\na\rho\vert\}.  
\end{equation}
Thus
\begin{align}\label{flux.cnv.ae.1}
	\left(1-\frac{\chi\rhoe}{\lvert\na\rhoe\rvert}\right)_+\na\rhoe\to\left(1-\frac{\chi\rho}{\lvert\na\rho\rvert}\right)_+\na\rho\quad\mbox{a.e.~in }(0,T)\times \Rd\cap\{0< \rho\chi < \vert\na\rho\vert\}.
\end{align}
Furthermore %
\begin{align*}
	&\hspace{-6mm}\left[\left(\vert\na\rhoe\vert^2-\rho^2\chi^2\right)_+-\left(\vert\na\rho\vert^2-\rho^2\chi^2\right)_+\right]\times\left(\lvert\na\rhoe\rvert^2-\lvert\na\rhoe\rvert^2\right)\\
	=&\left[\left(\vert\na\rhoe\vert^2-\rho^2\chi^2\right)_+-\left(\vert\na\rho\vert^2-\rho^2\chi^2\right)_+\right]^2\\
	&-\left[\left(\vert\na\rhoe\vert^2-\rho^2\chi^2\right)_+-\left(\vert\na\rho\vert^2-\rho^2\chi^2\right)_+\right]\left[\left(\vert\na\rhoe\vert^2-\rho^2\chi^2\right)_--\left(\vert\na\rho\vert^2-\rho^2\chi^2\right)_-\right]\\
	=&\left[\left(\vert\na\rhoe\vert^2-\rho^2\chi^2\right)_+-\left(\vert\na\rho\vert^2-\rho^2\chi^2\right)_+\right]^2\\
	&+\left(\vert\na\rhoe\vert^2-\rho^2\chi^2\right)_+\left(\vert\na\rho\vert^2-\rho^2\chi^2\right)_-+\left(\vert\na\rhoe\vert^2-\rho^2\chi^2\right)_-\left(\vert\na\rho\vert^2-\rho^2\chi^2\right)_+.
\end{align*}
So, \eqref{entr.computations.2} gives 
\begin{align*}
	\left(\vert\na\rhoe\vert^2-\rhoe^2\chi^2\right)_+\rightarrow\left(\vert\na\rho\vert^2-\rho^2\chi^2\right)_+\quad\mbox{a.e. in }{(0,T)\times\Rd\cap \{\rho>0\}}.
\end{align*}
As a consequence we find
\begin{align*}
	\lim\limits_{\eps\rightarrow0}\left|\left(1-\frac{\chi\rhoe}{\lvert\na\rhoe\rvert}\right)_+\na\rhoe\right|\left(\vert\na\rhoe\vert+\rhoe\chi\right)_+\leq\lim\limits_{\eps\rightarrow0}\left(\vert\na\rhoe\vert^2-\rhoe^2\chi^2\right)_+=\left(\vert\na\rho\vert^2-\rho^2\chi^2\right)_+=0,\\
\mbox{a.e.~in }(0,T)\times \Rd\cap\{\rho\chi\geq\vert\na\rho\vert, ~ {\rho>0} \},
\end{align*}
hence we obtain
\begin{align}\label{flux.cnv.ae.2}
\left(1-\frac{\chi\rhoe}{\lvert\na\rhoe\rvert}\right)_+\na\rhoe\to 0=\left(1-\frac{\chi\rho}{\lvert\na\rho\rvert}\right)_+\na\rho\quad\mbox{a.e.~in }(0,T)\times \Rd\cap\{\rho\chi\geq\vert\na\rho\vert, ~ {\rho>0} \}.
\end{align}
To identify the a.e.~limit on the set $(0,T)\times \Rd\cap\{\rho=0\}$ we proceed as follows.
Consider
\begin{align*}
\left\|\left(1-\frac{\chi\rhoe}{\lvert\na\rhoe\rvert}\right)_+\na\rhoe\,{\bf 1}_{\{\rho=0\}}\right\|_{4/3}
\leq 
2\|\nabla\sqrt{\rhoe}\|_2\|\sqrt{\rhoe} {\bf 1}_{\{\rho=0\}}\|_4
=
2\|\nabla\sqrt{\rhoe}\|_2\|\rhoe {\bf 1}_{\{\rho=0\}}\|_2^{1/2}.
\end{align*}
Given that $\|\nabla\sqrt{\rhoe}\|_2$ is bounded and $\rhoe {\bf 1}_{\{\rho=0\}}\to \rho {\bf 1}_{\{\rho=0\}} = 0$
strongly in $L^2(\R^d\times(0,T))$, we conclude that 
$\left(1-\frac{\chi\rhoe}{\lvert\na\rhoe\rvert}\right)_+\na\rhoe\to 0$ strongly in $L^{4/3}(\R^d\times(0,T)\cap\{\rho=0\})$. In particular, up to subsequences
\begin{align}\label{flux.cnv.ae.0}
	\left(1-\frac{\chi\rhoe}{\lvert\na\rhoe\rvert}\right)_+\na\rhoe\rightarrow 0 = \left(1-\frac{\chi\rho}{\lvert\na\rho\rvert}\right)_+\na\rho\quad\mbox{a.e.~in } (0,T)\times \Rd\cap\{\rho=0\}.
\end{align}
Putting \eqref{flux.cnv.ae.1}--\eqref{flux.cnv.ae.0} together yields the desired conclusion
\begin{align*}
\left(1-\frac{\chi\rhoe}{\lvert\na\rhoe\rvert}\right)_+\na\rhoe\rightarrow\left(1-\frac{\chi\rho}{\lvert\na\rho\rvert}\right)_+\na\rho\quad\mbox{a.e. in } (0,T)\times \Rd,
\end{align*}
and we are finally able to identify the weak $L^2$ limit of the term $\left(1-\frac{\chi\rhoe}{\lvert\na\rhoe\rvert}\right)_+\na\rhoe$:
\begin{align}
	\left(1-\frac{\chi\rhoe}{\lvert\na\rhoe\rvert}\right)_+\na\rhoe\rightharpoonup\left(1-\frac{\chi\rho}{\lvert\na\rho\rvert}\right)_+\na\rho\quad\mbox{weakly in } L^2((0,T)\times \Rd). \label{limit_final_flux}
\end{align}

\subsection{The vanishing viscosity limit} We have now all the ingredients to pass to the limit in (\ref{smooth.eq_bis}). From \eqref{time_derivative_bound} we obtain 
\begin{align*}
	\| \partial_t\rhoe\|_{L^2(0,T;H^{-1}(\Rd))}\leq C\|\rho_{{\rm in},\eps}\|_{L^2(\Rd)}.
\end{align*}
Since $\rho_{{\rm in},\eps}$ is uniformly bounded in $L^2(0,T;\Rd)$, the last inequality yields
\begin{align*}
	\partial_t\rhoe\rightharpoonup\partial_t\rho\quad\mbox{weakly in }\;\; L^2(0,T;H^{-1}). 
\end{align*}
Convergence  (\ref{limit_final_flux}) implies that 
\begin{align*}
\int \int 	\left(1-\frac{\chi\rhoe}{\lvert\nabla\rhoe\rvert}\right)_+\nabla\rhoe \nabla \varphi \;dxdt \to \int \int  \left(1-\frac{\chi\rho}{\lvert\nabla\rho\rvert}\right)_+\nabla\rho \nabla \varphi \;dxdt.
\end{align*}
The viscous terms vanish, since 
\begin{align*}
	\lim_{\eps\rightarrow0}\left|\eps\int_0^T\int_{\Rd}\nabla\rhoe\nabla\varphi\dx\dt+\eps\int_0^T\int_{\Rd}\rhoe\varphi\dx\dt\right|&\leq\lim_{\eps\rightarrow0}\eps C\|\rho_{{\rm in},\eps}\|_{L^2(0,T;\Rd)}\|\varphi\|_{L^2(0,T; H^1(\Rd))}\\
	&=0.
\end{align*}
Hence, we have constructed a weak solution $\rho$  to \eqref{weak.formulation.1}. In the next two subsection we show that such solution is unique and converges strongly  in $L^p$ to the initial data $\rho_{in}$.  

\subsection{Obtaining the initial data} 
In what follows, we will show that $\rho(t)\rightarrow\rhoin$ strongly in $L^p(\Rd)$ as $t\to 0$.  We start by recalling that $\rhoe$ is bounded in the space
$$
\left\{
u\in L^2(0,T; H^1(\R^d))~~ : ~~ \pa_t u \in L^2(0,T; H^{-1}(\R^d))
\right\},
$$
which embeds continuously into $C^0([0,T]; L^2(\R^d))$ \cite[Prop.~23.23]{ZeidlerNonlinearIIA}. In particular it follows that $\rhoe(t)\rightharpoonup\rho(t)$ weakly in $H^{-1}(\R^d)$ for every $t\in [0,T]$, hence
\begin{equation}\label{ic.comput.1}
\lim_{\eps\to 0}\IRd \rhoe(t)\varphi dx = \IRd \rho(t)\varphi dx\quad\forall \varphi\in H^1(\R^d).
\end{equation}
Also remember that the initial condition for the approximated problem \eqref{smooth.eq_bis}
is satisfied in the sense of \eqref{in.c.ugly}.

Define for brevity $A_\eps := \eps + \left(1 - \frac{\chi\rhoe}{|\na\rhoe|} \right)_+$,
$A := \left(1 - \frac{\chi\rho}{|\na\rho|} \right)_+$.
Choose $\varphi\in H^1(\R^d)$ independent of time in \eqref{smooth.eq_bis}.
It follows
\begin{align*}
&\left| \IRd \rhoe(\tau)\varphi \dx - \IRd \rho_{{\rm in},\eps}\varphi \dx \right|\\
&\quad = \left|
\int_0^\tau\IRd A_\eps\na\rhoe\cdot\na\varphi \dx\dt 
+ \eps\int_0^\tau\IRd \rhoe\varphi \dx\dt
\right|\\
&\quad \leq (1+\eps)\left(
\|\na\rhoe\|_{L^2(0,\tau; L^2(\R^d))}\|\na\varphi\|_{L^2(0,\tau; L^2(\R^d))} +
\|\rhoe\|_{L^2(0,\tau; L^2(\R^d))}\|\varphi\|_{L^2(0,\tau; L^2(\R^d))}
\right)\\
&\quad \leq C\tau^{1/2}\|\varphi\|_{H^1(\R^d)}.
\end{align*}
Relation \eqref{ic.comput.1} allows us to take the limit $\eps\to 0$ in the above estimate and obtain
\begin{align*}
&\left| \IRd \rho(\tau)\varphi \dx - \IRd \rho_{0}\varphi \dx \right|\leq C\tau^{1/2}\|\varphi\|_{H^1(\R^d)},
\end{align*}
implying
\begin{equation}\label{ic.comput.2}
\rho(t) \to \rho_0\quad\mbox{strongly in $H^{-1}(\R^d)$ as $t\to 0$.}
\end{equation}
Define now, for $M>0$ arbitrary, 
$$
F_M(s) = \int_0^s f_M(\sigma)d\sigma,\quad 
f_M(s) = \begin{cases}
p s^{p-1} & 0\leq s\leq M\\
p(p-1)M^{p-2}(s-M) + pM^{p-1} & s > M
\end{cases}.
$$
Choose $\varphi = f_M(\rho)\chf{[t_1,t_2]}(t)\in L^2(0,T; H^1(\R^d))$ in \eqref{weak.formulation.1}.
It follows
\begin{align*}
\IRd F_M(\rho(t_2)) dx - \IRd F_M(\rho(t_1)) dx 
&= \int_{t_1}^{t_2}\IRd A f_M'(\rho)|\nabla\rho|^2 dx dt \\
&= \frac{4}{p^2}\int_{t_1}^{t_2}\IRd A f_M'(\rho)\rho^{2-p}|\nabla\rho^{p/2}|^2 dx dt.
\end{align*}
Since $f_M'(\rho)\rho^{2-p} = p(p-1)\min(\rho^{p-2},M^{p-2})\rho^{2-p}\leq p(p-1)$,
$F_M(\rho)\leq \rho^p$, Proposition \ref{Proposition LP} and the dominated convergence theorem allow us to take the limit $M\to \infty$ in the above identity and deduce
\begin{align*}
\IRd \rho(t_2)^p dx - \IRd \rho(t_1)^p dx 
&= \frac{4(p-1)}{p}\int_{t_1}^{t_2}\IRd A |\nabla\rho^{p/2}|^2 dx dt,
\end{align*}
for a.e.~$t_1, t_2\in [0,T]$. It follows that the function $t\mapsto\|\rho(t)\|_{L^p(\R^d)}^p$ belongs to $W^{1,1}(0,T)$, hence up to changing the values of $\rho : [0,T]\to L^p(\R^d)$ in a zero measure set it holds that $t\mapsto\|\rho(t)\|_{L^p(\R^d)}^p$ is continuous in $[0,T]$. As a consequence the bound $\|\rho(t)\|_{L^p(\R^d)}\leq\|\rho_{in}\|_{L^p(\R^d)}$
holds for every $t\in [0,T]$. In particular $\rho(t)$ is weakly convergent (up to subsequences) in $L^p(\R^d)$ as $t\to 0$; however, its weak limit has to be $\rho_{\in}$ due to \eqref{ic.comput.2}, hence $\rho(t)\rightharpoonup\rho_{in}$ weakly in $L^p(\R^d)$ as $t\to 0$ (no longer up to subsequences). It follows
$$
\|\rho_{in}\|_{L^p(\R^d)}\leq \liminf_{t\to 0}\|\rho(t)\|_{L^p(\R^d)}\leq 
\limsup_{t\to 0}\|\rho(t)\|_{L^p(\R^d)}\leq \|\rho_{in}\|_{L^p(\R^d)} ,
$$
implying that $\|\rho(t)\|_{L^p(\R^d)}\to \|\rho_{in}\|_{L^p(\R^d)}$ as $t\to 0$. This fact and the weak convergence $\rho(t)\rightharpoonup\rho_{in}$ in $L^p(\R^d)$, imply that 
 $\rho(t)\rightarrow\rhoin$ strongly in $L^p(\Rd)$ as $t\to 0$.

\subsection{Uniqueness} Let $\rho_1$ and $\rho_2$ be two solutions to \eqref{weak.formulation.1} with the same initial data $\rho_{in}$. As in Section \ref{sec:entropy_limit}, we subtract equation \eqref{weak.formulation.1} for $\rho_2$ tested with $\varphi_2=\frac{\rho_1+\sigma}{\rho_2+\sigma}{-1} =\frac{ \rho_1 - \rho_2}{\rho_2 + \sigma}$ to the equation for $\rho_1$ tested with $\varphi_1=\log\left(\frac{\rho_1+\sigma}{\rho_2+\sigma}\right)$. Following the same computations as in \eqref{equiv_entr}, we get 
\begin{align*}%
	\int_0^T\int_{\Rd}&\left[ \varphi_1 \pa_t\rho_1 -\varphi_2 \pa_t\rho_2 \right]\dx\;dt=\nonumber\\
	&=\int_0^T \frac{d}{dt}\int_{\Rd} \left({\rho_1+\sigma}\right)\log{\left(\frac{\rho_1+\sigma}{\rho_2+\sigma}\right)}-\rho_1+\rho_2  \dx \dt = \int_0^T \frac{d}{dt} H[\rho_1+\sigma | \rho_2+\sigma]\;dt. 
\end{align*}
On the other hand, substituting $\partial_t \rho_1$ and $\partial_t \rho_2$ with the respective right hand side of \eqref{weak.formulation.1}, after some integration by parts we obtain:
\small 
\begin{align}\label{uniqueness comp}
	&\int_0^T\int_{\Rd}\left[ \varphi_1 \pa_t\rho_1 -\varphi_2 \pa_t\rho_2 \right]\dx\;dt \\
	&=-\frac{1}{2}\int_0^T\hspace{-2mm}\IRd(\rho_1+\sigma)\left[\left(1-\frac{\chi(\rho_1+\sigma)}{\lvert\na\rho_1\rvert}\right)_+\hspace{-2mm}-\left(1-\frac{\chi(\rho_2+\sigma)}{\lvert\na\rho_2\rvert}\right)_+\right] \cdot  \nonumber \\
 & \;\;\;\;\;\;\;\;\;\;  \cdot \left(\lvert\na\log(\rho_1+\sigma)\rvert^2-\lvert\na\log(\rho_2+\sigma)\rvert^2\right)\dx\dt\nonumber\\
	&\;\;\;-\frac{1}{2}\int_0^T\hspace{-2mm}\IRd(\rho_1+\sigma)\left\vert\na\log\left(\frac{\rho_1+\sigma}{\rho_2+\sigma}\right)\right\vert^2\left[\left(1-\frac{\chi(\rho_1+\sigma)}{\lvert\na\rho_1\rvert}\right)_++\left(1-\frac{\chi(\rho_2+\sigma)}{\lvert\na\rho_2\rvert}\right)_+\right]\dx\dt \nonumber \\
 &  \;\;\;+ \xi_1+\xi_2,\nonumber
\end{align}
\normalsize
where
\begin{align*}
	\xi_1:=-\int_0^T\hspace{-2mm}\IRd\left[\left(1-\frac{\chi\rho_1}{\lvert\na\rho_1\rvert}\right)_+-\left(1-\frac{\chi(\rho_1+\sigma)}{\lvert\na\rho_1\rvert}\right)_+\right]\na\rho_1 \na\left(\log\frac{\rho_1+\sigma}{\rho_2+\sigma}\right)\dx\dt,
\end{align*}
and 
\begin{align*}
	\xi_2:=\int_0^T\hspace{-2mm}\IRd\left[\left(1-\frac{\chi\rho_2}{\lvert\na\rho_2\rvert}\right)_+-\left(1-\frac{\chi(\rho_2+\sigma)}{\lvert\na\rho_2\rvert}\right)_+\right]\na\rho_2\na\left(\frac{\rho_1+\sigma}{\rho_2+\sigma}\right)\dx\dt.
\end{align*}
The first two integrals in (\ref{uniqueness comp}) are positive, the first because of monotonicity and the second because it is just the product of positive functions. Since both have negative signs in front, we can ignore them and we get 
$$
\int_0^T\int_{\Rd}\left[ \varphi_1 \pa_t\rho_1 -\varphi_2 \pa_t\rho_2 \right]\dx\;dt \le  \xi_1 + \xi_2.
$$
{We want to show that $\limsup_{\sigma\to 0}(\xi_1 + \xi_2)\leq 0$. Dominated convergence theorem will be used. We only have to show that the integrands in $\xi_1$ and $\xi_2$ are dominated by an integrable function (uniformly in $\sigma$). We start with $\xi_1$:}
\begin{align*}
	\xi_1\leq&\int_0^T\hspace{-2mm}\IRd\left[\left(1-\frac{\chi\rho_1}{\lvert\na\rho_1\rvert}\right)_+-\left(1-\frac{\chi(\rho_1+\sigma)}{\lvert\na\rho_1\rvert}\right)_+\right]\na\rho_1\frac{\na\rho_2}{\rho_2+\sigma}\dx\dt\\
	\leq&\underbrace{\frac{1}{2}\int_0^T\hspace{-2mm}\IRd\left[\left(1-\frac{\chi\rho_1}{\lvert\na\rho_1\rvert}\right)_+-\left(1-\frac{\chi(\rho_1+\sigma)}{\lvert\na\rho_1\rvert}\right)_+\right]\frac{|\na\rho_1|^2}{\rho_2+\sigma}\dx\dt}_{\text{(I)}}\\
	&+\underbrace{\frac{1}{2}\int_0^T\hspace{-2mm}\IRd\left[\left(1-\frac{\chi\rho_1}{\lvert\na\rho_1\rvert}\right)_+-\left(1-\frac{\chi(\rho_1+\sigma)}{\lvert\na\rho_1\rvert}\right)_+\right]\frac{|\na\rho_2|^2}{\rho_2+\sigma}\dx\dt}_{\text{(II)}}.
\end{align*}
It is easy to see that {the integrand in} (II) is controlled by $|\na\sqrt\rho_2|^2\in L^1(\R^d\times (0,T))$. For (I) we use the Lipschitz condition \eqref{lipschitz condition} and we get
\begin{align*}
|\mbox{Integrand in (I)}|\leq\frac{1}{2}|\na\rho_1| \leq 
 |\na\sqrt\rho_1| \sqrt{\rho_1}\in L^1(\R^d\times (0,T)).
\end{align*}
This shows that $\xi_1$ is uniformly bounded with respect to $\sigma$. Let us estimate $\xi_2$:
\begin{align*}
	\xi_2\leq&\int_0^T\hspace{-2mm}\IRd\left[\left(1-\frac{\chi\rho_2}{\lvert\na\rho_2\rvert}\right)_+-\left(1-\frac{\chi(\rho_2+\sigma)}{\lvert\na\rho_2\rvert}\right)_+\right]\frac{\na\rho_2\na\rho_1}{\rho_2+\sigma}\dx\dt\\
	&-\int_0^T\hspace{-2mm}\IRd\left[\left(1-\frac{\chi\rho_2}{\lvert\na\rho_2\rvert}\right)_+-\left(1-\frac{\chi(\rho_2+\sigma)}{\lvert\na\rho_2\rvert}\right)_+\right]\frac{(\rho_1+\sigma)|\na\rho_2|^2}{(\rho_2+\sigma)^2}\dx\dt\\
	\leq&\;\frac{1}{2}\int_0^T\hspace{-2mm}\IRd\left[\left(1-\frac{\chi\rho_2}{\lvert\na\rho_2\rvert}\right)_+-\left(1-\frac{\chi(\rho_2+\sigma)}{\lvert\na\rho_2\rvert}\right)_+\right]\frac{|\na\rho_1|^2}{\rho_1+\sigma}\dx\dt\\
	&-\frac{1}{2}\int_0^T\hspace{-2mm}\IRd\left[\left(1-\frac{\chi\rho_2}{\lvert\na\rho_2\rvert}\right)_+-\left(1-\frac{\chi(\rho_2+\sigma)}{\lvert\na\rho_2\rvert}\right)_+\right]\frac{(\rho_1+\sigma)|\na\rho_2|^2}{(\rho_2+\sigma)^2}\dx\dt\\
	\leq& \;{\frac{1}{2}\int_0^T\hspace{-2mm}\IRd\left[\left(1-\frac{\chi\rho_2}{\lvert\na\rho_2\rvert}\right)_+-\left(1-\frac{\chi(\rho_2+\sigma)}{\lvert\na\rho_2\rvert}\right)_+\right]\frac{|\na\rho_1|^2}{\rho_1}\dx\dt.}
\end{align*}
{The integrand in the last expression is controlled by $|\nabla\sqrt{\rho_1}|\in L^1(\R^d\times (0,T))$. Therefore by dominated convergence we can take }
the limit $\sigma\rightarrow0$ in \eqref{uniqueness comp} to obtain
\begin{align*}
\int_{\Rd} & \left[  \rho_1 \log{\left(\frac{\rho_1}{\rho_2}\right)}- \rho_1+ \rho_2\right](T)  \dx 
 \; {\leq \limsup_{\sigma\to 0}(\xi_1 + \xi_2)}  \; \leq \;0.
\end{align*}
This implies that $\rho_1=\rho_2$ and finishes the proof of Theorem \ref{thr.ex}.

\end{document}